\journal{Linear Algebra and its Applications}
\def\Lh{\widehat\Lambda}
\def\Lt{\smash{\widetilde\Lambda}}
\def\Lh{\smash{\widehat\Lambda}}
\def\Vh{\smash{\widehat V}}
\def\Vt{\smash{\widetilde V}}
\def\Ah{\smash{\widehat A}}
\def\Rh{\smash{\widehat R}}
\def\At{\smash{\widetilde A}}
\def\Dt{\smash{\widetilde D}}
\def\gap{\mbox{gap}}
\newcommand{\project}[1]{{\bm\Pi}_{#1}}
\newcommand{\Sym}{\mbb{S}}
\newcommand{\mbb}{\mathbb}
\newcommand{\eqdef}{:=}
\newcommand{\tpose}{^T}
\newcommand{\norm}[1]{\left\|{#1}\right\|}
\newenvironment{proof}{\paragraph{Proof:}}{\hfill$\square$}
\newcommand{\diag}{{\rm diag}}
\newtheorem{theorem}{Theorem}[section]
\newtheorem{lemma}{Lemma}[section]
\newtheorem{corollary}{Corollary}[section]
\newtheorem{example}{Example}[section]
\newcommand{\rr}[1]{\textcolor{red}{#1}}
\newcommand{\ignore}[1]{}
\begin{document}

\begin{frontmatter}

  %% Title, authors and addresses

  %% use the tnoteref command within \title for footnotes;
  %% use the tnotetext command for theassociated footnote;
  %% use the fnref command within \author or \address for footnotes;
  %% use the fntext command for theassociated footnote;
  %% use the corref command within \author for corresponding author footnotes;
  %% use the cortext command for theassociated footnote;
  %% use the ead command for the email address,
  %% and the form \ead[url] for the home page:
  %% \title{Title\tnoteref{label1}}
  %% \tnotetext[label1]{}
  %% \author{Name\corref{cor1}\fnref{label2}}
  %% \ead{email address}
  %% \ead[url]{home page}
  %% \fntext[label2]{}
  %% \cortext[cor1]{}
  %% \address{Address\fnref{label3}}
  %% \fntext[label3]{}

  \title{Accuracy of approximate projection to the semidefinite cone}

  \author[engineering]{Paul J. Goulart}
  \ead{paul.goulart@eng.ox.ac.uk}
  \author[maths]{Yuji Nakatsukasa}
  \ead{nakatsukasa@maths.ox.ac.uk}
  \author[engineering]{Nikitas Rontsis\corref{cor}}
  \ead{nikitas.rontsis@eng.ox.ac.uk}
  \cortext[cor]{Corresponding author}
  \address[engineering]{Department of Engineering Science,
  University of Oxford,
  Oxford, OX1 3PN, UK.}
  \address[maths]{Mathematical Institute,  University of Oxford, Oxford, OX2 6GG, UK, and\\ National Institute of Informatics, Japan.}

  \begin{abstract}
    When a projection of a symmetric or Hermitian matrix to the positive semidefinite cone is computed approximately (or to working precision on a computer), a natural question is to quantify its accuracy.
    A straightforward bound invoking standard eigenvalue perturbation theory
     (e.g.~Davis-Kahan and Weyl bounds) suggests that the accuracy would be inversely proportional to the spectral gap, implying it can be poor in the presence of small eigenvalues.
    This work shows that a small gap is not a concern for projection onto the semidefinite cone, by deriving error bounds that are gap-independent.
  \end{abstract}

  %%Graphical abstract
  %\begin{graphicalabstract}
  %\includegraphics{grabs}
  %\end{graphicalabstract}

  %%Research highlights
  %\begin{highlights}
  %\item Research highlight 1
  %\item Research highlight 2
  %\end{highlights}

  \begin{keyword}
    positive semidefinite cone \sep
    projection \sep
    matrix nearness problem \sep
    eigenvalue perturbation theory
    % https://mathscinet.ams.org/mathscinet/msc/msc2010.html
    \MSC[2010] 65F15 \sep %	Numerical analysis: Eigenvalues, eigenvectors
    15A45 \sep % Linear	and multilinear algebra; matrix theory: Miscellaneous inequalities involving matrices
    15B48 \sep %	Positive matrices and their generalizations; cones of matrices
    90C22 % Semidefinite programming

  %% keywords here, in the form: keyword \sep keyword

  %% PACS codes here, in the form: \PACS code \sep code

  %% MSC codes here, in the form: \MSC code \sep code
  %% or \MSC[2008] code \sep code (2000 is the default)
  \end{keyword}

\end{frontmatter}

\section{Introduction}
Projection of symmetric or Hermitian matrices to the positive semidefinite cone is a standard operation that arises frequently in scientific computing. A common, practical, example is restoring positive definiteness of partially unknown or corrupted correlation matrices~\cite{Qi2006} arising in e.g., economics~\cite{Higham2002}, integrated circuit design~\cite{Xiong2007} and wireless communications ~\cite{Higham2016}. Further, more generic, examples include quasi-newton optimization methods \cite[\S 4.2.2]{Fletcher1987}, incomplete matrix factorizations of sparse matrices~\cite[\S 15.11]{Duff2017} and, finally, first order methods for solving semidefinite problems (SDPs)~\cite{Boyd2011,Vandenberghe1996} which, as we proceed to explain, was the motivating example for this work.

The projection operator $\project{+}$ maps a symmetric matrix to a nearest symmetric positive semidefinite matrix. As such, it belongs to the class of \emph{matrix nearness problems}, a survey of which can be found in \cite{Higham1989}. When a unitarily invariant norm is chosen as a distance metric,
%\footnote{Note that the projection is not unique when the spectral norm is chosen as a distance metric.}
the projection operator can be expressed in ``closed form''. Indeed, we show in \S\ref{sec:other_norms} that if $A\in\mathbb{S}^{n}$ with eigenvalue decomposition
\begin{equation} \label{eqn:eigendecomposition}
  A = \begin{bmatrix}
    V_{+} & V_{-}
  \end{bmatrix}
  \begin{bmatrix}
  \Lambda_{+} \\
  & \Lambda_{-}
\end{bmatrix}
\begin{bmatrix}
  V_{+} & V_{-}
\end{bmatrix}^T,
\end{equation}
where $[V_{+} \; V_{-}]$ is an $n \times n$ orthogonal matrix with
$V_+$ having $k$ columns and $\Lambda_{+}, \Lambda_-$ diagonal matrices containing the positive and non-positive eigenvalues respectively, then we can set: %\footnote{For a given $A$ this is the unique projection to the semidefinite cone in the Frobenius norm. In the spectral norm, there might exist many positive (semi)definite matrices $X$ that minimize $\norm{X - A}_2$ with $V_+\Lambda_+V_+^T$ always being a minimizer .}
\begin{equation} \label{eqn:projection}
  \project{+}(A) \eqdef V_+\Lambda_+V_+^T.
\end{equation}

When $\project{+}(A)$ is computed in practice, either via the full eigenvalue decomposition or an iterative method such as the (block) Lanczos method, one obtains approximations
$\Lh$ ($\tilde k \times \tilde k$ diagonal), $\Vh$ ($n \times \tilde k$ orthonormal) to $\Lambda_+, V_+$
such that
$\Vh\Lh\Vh^T\approx V_+\Lambda_+V_+^T$.
We usually (but not necessarily) take $\Lh = \Vh^TA\Vh$, which holds by the
% as is natural and common; for example it always holds with the
standard Rayleigh-Ritz process~\cite[Ch.~11]{Parlett1998}.
Note that the sizes $k$ and $\tilde k$ are not assumed to agree since the number of positive eigenvalues computed may not be exact, especially in the presence of eigenvalues close to 0.

With a backward stable algorithm, we have
$\|A\Vh-\Vh\Lh\| = O(u)\|A\|$, where $u$ is the unit roundoff.
%In general, $\|A\Vh-\Vh\Lh\|$ along with $\Lh$ are the only information available after computation;
The question we address is: what does this imply
about the size of the projection error $\|\Vh\Lh\Vh^T- V_+\Lambda_+V_+^T\|$?

The first observation can arise if we assume that $k = \tilde k$ and define
$\Vh=V_++\Delta V$, $\Lh=\Lambda_++\Delta \Lambda$, resulting in
\begin{equation}
  \label{eq:vtvnaive}
\|\Vh\Lh\Vh^T- V_+\Lambda_+V_+^T\|
\leq 2\|\Delta V\Lambda_+V_+^T\|
+\| V_+\Delta \Lambda V_+^T\|
+O(\norm{\Delta}^2)
\end{equation}
where $\norm{\Delta} \eqdef \max(\norm{\Delta V}, \norm{\Delta \Lambda})$. This suggests bounding the term $\|\Delta V\|$, which usually dominates the $\|\Delta \Lambda\|$ term since eigenvalues of symmetric matrices are always well-conditioned and hence $\|\Delta \Lambda\|=O(u)\|A\|$ with a backward stable method.   The term $\Delta V$ is the error in the computed eigenvectors, for which
the classical Davis-Kahan $\sin\theta$ theorem~\cite{Davis1970} shows that
\begin{equation}
  \label{eq:DK}
  \|\sin\angle(V_+,\Vh)\|\leq \frac{\|R\|}{\gap}
  %\frac{\|A\Vh-\Vh\Lh\|_2}{\gap}
  %=\frac{\|A\Vh-\Vh\Lh\|_2}{\gap}
\end{equation}
in any unitarily invariant norm, where $\angle(V_+,\Vh)$ is a $k \times k$ diagonal matrix containing the principal angles between $V_+$ and $\Vh$. Here $R \eqdef A\Vh-\Vh\Lh$ is the residual and
\mbox{`gap'} is the distance between the computed positive eigenvalues $\Lh$ and the exact  nonpositive eigenvalues, i.e.
\begin{equation}
  \label{eqn:gap}
  \gap \eqdef \min_{i, j} \left| \Lh_{(i, i)} - {\Lambda_-}_{(j, j)}\right|.
\end{equation}% YN: subtlety here but removed by 'same size' assumption.
%eigenvalues of $V_+$(which are roughly the nonpositive eigenvalues).
The bound~\eqref{eq:DK} is essentially sharp~(although sometimes improvable~\cite{Nakatsukasa2018}), and implies\footnote{
  $\norm{\Delta V}$ also depends on the ``small gap'' $\eqdef \min_{i \neq j} \left| \Lh_{(i, i)} - {\Lambda}_{(j, j)}\right|$, where $\Lambda \eqdef \big[\begin{smallmatrix}  \Lambda_{+} \\ &   \Lambda_{-} \end{smallmatrix}\big]$, but this is not a practical concern when $\norm{R} \leq $ ``small gap''. See \cite[Theorem 3.1 and Remark 3.1]{Nakatsukasa2018} for a rigorous discussion on the effect of spectral gaps on $\norm{\Delta V}$.
} $\|\Delta V\|\lesssim \frac{\|R\|}{\gap}$.
Together with~\eqref{eq:vtvnaive}, we obtain
\begin{equation}  \label{eq:naive}
\|\Vh\Lh\Vh^T- V_+\Lambda_+V_+^T\|
\lesssim \frac{2\|R\|\|\Lambda_+ V_+^T\|}{\gap} \leq \frac{2\|R\|\| A\|}{\gap}.
\end{equation}
The problem here is that \eqref{eq:vtvnaive} suggests that %$\|\Delta V\|$ is
the projection error would be
 large if gap is small, and
we have no control over how small $\gap$ can be! Indeed, in many interesting cases $A$ has eigenvalues of small magnitude (both positive and negative). This causes problems in multiple ways---the $\gap$ indeed gets small, and
since approximations to the small eigenvalues may have the wrong signs,
%positive eigenvalues may be computed negatively
$\Lh$ may not contain the correct number of positive eigenvalues.
%Intuitively, this appears an unpleasant situation--the computed projection resulted in a matrix of wrong rank.

The situation is further exacerbated when $\Lh$, $\Vh$ are computed not to full working precision, but only to a looser tolerance $\epsilon\gg u$. Then, $\|R\|=O(\epsilon\|A\|)$ instead of $O(u\|A\|)$.
Indeed, this work was motivated in the context of devising efficient algorithms for semidefinite optimization with the Alternating Direction Method of Multipliers (ADMM) \cite{Garstka2019}. ADMM is an iterative method in which every iteration entails a projection to the semidefinite cone.  These projections typically dominate the (total) computation time of ADMM, thus an inexact method for their computation is desirable to reduce ADMM's execution time and scale ADMM to large SDPs. Naturally, the projection error must be quantified and controlled so that ADMM maintains its convergence properties.  This can be achieved, for example, when the projection errors are summable, i.e.~when the sum of the projection errors over all the ADMM iterations is bounded \cite[Theorem 8]{Eckstein1992}. However, since the gap is unknown and cannot be controlled, bounds like~\eqref{eq:naive} are not very useful in such situations.

The purpose of this paper is to show that, fortunately, these problems suggested by~\eqref{eq:naive} are not a concern, i.e., small gaps do not affect the projection accuracy.
Specifically, our main result is
\begin{equation}  \label{eq:mainsimple}
\|\Vh\Lh\Vh^T- V_+\Lambda_+{V_+^T}\|_F\leq \sqrt{2}\|R\|_F  ,
\end{equation}
which holds when $\Lh$ has the same size as $\Lambda_+$ (otherwise the bound worsens, but only slightly). Unlike~\eqref{eq:naive}, the bound~\eqref{eq:mainsimple} is sharp up to a constant smaller than $\sqrt{2}$.
Since \mbox{$\|R\|_F=\|A\Vh-\Vh\Lh\|_F$} is easy to compute,~\eqref{eq:mainsimple} provides a practical means to estimate the projection accuracy.
%, using~\eqref{eq:mainsimple}
% we can bound

Noting that
$\project{+}(A) = V_+\Lambda_+V_+^T = A(V_+V_+^T)$,
we also treat an alternative measure of the projection accuracy
\begin{equation}  \label{eq:alte}
\|A(V_+V_+^T-\Vh\Vh^T)\|,
\end{equation}
and prove similar bounds for this quantity.
The two quantities $\|\Vh\Lh\Vh^T - V_+\Lambda_+V_+^T\|$ and~\eqref{eq:alte} are closely related, and
 we show in Section~\ref{sec:main2} that they must lie within $\|R\|$ of each other.

Here is an intuitive explanation for the gap-independence (which is easier to see with~\eqref{eq:alte}): while $\|\Delta V\|$ does depend on $1/\gap$, large errors in $\Delta V$ lie only in directions of eigenvectors $v_i$ of $A$ with small corresponding eigenvalues $\lambda_i$.
Essentially, $\Delta V$ has $O(\|R\|/|\lambda_i|)$ magnitude in the direction of $v_i$.
Crucially, such errors are suppressed when multiplied by $A$ as in~\eqref{eq:alte}, precisely by $\lambda_i$. Thus they cancel out to yield the quantity $\|R\|$ in our bounds for both $\|\Vh\Lh\Vh^T - V_+\Lambda_+V_+^T\|_F$ and $\|\Vh\Lh\Vh^T- V_+\Lambda_+V_+^T\|$ in~\eqref{eq:mainsimple} and~\eqref{eq:alte}, respectively. In what follows we make this intuition precise.

\emph{Notation}.
$\|\cdot\|_F$ denotes the Frobenius norm of a matrix, and $\|\cdot\|_2$ is the spectral norm (largest singular value).
$\|\cdot\|$ denotes a generic norm. We sometimes state (in)equalities that hold for any unitarily invariant norm; these will be stated explicitly.
$\Sym^n$ is the set of $n\times n$ Hermitian matrices, and $\Sym_+^n$ ($\Sym_-^n$)
is the set of positive (negative) semidefinite matrices in $\Sym^n$.
The projection operator onto the positive (negative) semidefinite cone is denoted $\project{+}$ ($\project{-})$; the domain of these operator should always be clear from the context. Given an orthonormal matrix $V$, we denote with $V_\perp$ some orthonormal matrix that spans the nullspace of $V^T$. $\lambda_{\max}(A)$ denotes the largest eigenvalue of a Hermitian matrix $A$ and $A \succ 0$ $(A \succeq 0)$ the positive (semi)definiteness of $A$.
For any matrix $B$, $\sigma_i(B)$ denotes the $i$th largest singular value.
Finally, given a set of vector $\{a_i\}$, $i \in \mathcal{I} \subseteq \mathbb{N}$, $[a_i]_{i \in \mathcal{I}}$, denotes horizontal concatenation of $a_{\mathcal{I}_{[1]}}, a_{\mathcal{I}_{[2]}}, \dots$ where $\mathcal{I}_{[i]}$ is the $i-$th smallest element in $\mathcal{I}$.

\ignore{
Let $A$ be an $n\times n$ symmetric matrix, and
$X\Lambda_+X^T$ its exact projection onto the semidefinite cone.
Let $k$ be the number of nonnegative eigenvalues, i.e., $\Lambda_+$ is
 $k\times k$.

Let $Q$ be an $n\times k$ matrix with orthonormal columns, such that
$Q^TAQ=\Lh$ is diagonal and positive semidefinite (this is obtained by extracting
nonnegative Ritz values (approximate eigenvalues) using a larger subspace containing $Q$).
Note that we are assuming that we have found exactly $k$ nonnegative Ritz values, equal to the number of nonnegative eigenvalues in the original matrix $A$ (note that the projected matrix $Q^TAQ$ cannot have more than $k$ nonnegative eigenvalues). \rr{We further assume that $(Q^\perp)^T AQ^\perp\preceq 0$, that is, the projection of $A$ onto the orthogonal complement $Q^\perp$ is negative semidefinite. This is a natural assumption, but does not necessarily follow from the assumption that $Q^TAQ$ has $k$ nonnegative eigenvalues.}
}

\section{Main result for the projection error}\label{sec:main}
In this section we derive the main result for the projection error $\|\Vh\Lh\Vh^T- V_+\Lambda_+V_+^T\|_F$. We begin with a presentation of the main idea used in this Section's proofs. Assume, for the purposes of this introductory presentation, that we have $\Lh = \Vh^T A \Vh$,  as it is common in practice, and that $\Lh \succeq 0$. Then, note that
\begin{equation}
  \begin{bmatrix}
    \Vh & \Vh_\perp
  \end{bmatrix}^T
  A
  \begin{bmatrix}
    \Vh & \Vh_\perp
  \end{bmatrix}
  =
  \begin{bmatrix}
    \Lh & \Rh^T \\
    \Rh & D
  \end{bmatrix}
\end{equation}
where $D \eqdef \Vh_\perp^TA\Vh_\perp$ and $\norm{{\Rh}} = \norm{R}$ for any unitarily invariant norm (derived in detail in Theorem \ref{thm:1}:\eqref{thm1:3}). Thus $\Ah \eqdef
  \begin{bmatrix}
    \Vh & \Vh_\perp
  \end{bmatrix}
  \begin{bmatrix}
    \Lh & 0 \\
    0 & D
  \end{bmatrix}
  \begin{bmatrix}
    \Vh & \Vh_\perp
  \end{bmatrix}^T
$
approximates $A$ with $\norm{A - \Ah} \leq \sqrt{2}\norm{R}$. Now, if we assume that $\Vh_\perp$ contains `most' of the negative eigenspace of $A$, in that $D \preceq 0$, then we have
$\project{+}(\Ah) = \Vh \Lh \Vh^T$.
% and $\Pi_-(\Ah) = \Vh_\perp^T D \Vh_\perp$
We can then write
\begin{equation}
  \|\Vh\Lh\Vh^T- V_+\Lambda_+V_+^T\| = \norm{\project{+}(\Ah) - \project{+}(A)}.
\end{equation}
The result~\eqref{eq:mainsimple} then follows immediately from the non-expansiveness of $\project{+}$ in the Frobenius norm \cite[Proposition 4.16]{Bauschke2011}, i.e., $\norm{\project{+}(\Ah) - \project{+}(A)}_F \leq \norm{\Ah - A}_F = \sqrt{2}\norm{R}_F$.

The following proofs generalize this result for the cases where $\Lh \neq \Vh^T A \Vh$ and/or $D \not \preceq 0$.
\ignore{
Below we work with the matrix
$    [\Vh \; \; \Vh_\perp]^T    A    [\Vh \; \; \Vh_\perp]  =\big[
\begin{smallmatrix}
\Vh^T A\Vh & R^T \\      R & D  \end{smallmatrix} \big]
$,
where
 $[\Vh\ \Vh_\perp]$ is orthogonal and
$  D =
\big[
\begin{smallmatrix}
D_+ & \\ & D_-
\end{smallmatrix}
\big]$ is diagonal (again, $\Vh^T A\Vh=\Lh$ is common but not necessary).
 $D_+$ has positive diagonals, which are ``correction'' terms that account for inexactness in $\Vh,\Lh$; $D_+$ is empty when $k=\widetilde k$ and
$R\rightarrow 0$.
%the case where the computed eigenvalues $\Lh$ did not capture the correct number of positive eigenvalues.
We also note that the use of $R$ is a slight abuse of notation, since it is not equal to the residual $A\Vh-\Vh\Lh$ used previously. This notation clash is left intentionally, since $\|R\|=\|A\Vh-\Vh\Lh\|$  in any unitarily invariant norm, and we only use the norm $\|R\|$ in the result.
}

\begin{theorem}\label{thm:1}

Suppose that the matrix $A\in \Sym^n$, $\Vh$ is an $n \times k$ orthonormal matrix with $k \le n$ and $\Lh$ is a $k\times k$ positive semidefinite matrix.
%Let $\Vh_\perp$ be such that $[\Vh \; \Vh_\perp]$ is orthogonal.
Then writing $R = A\Vh-\Vh\Lh$ and $D_+=\project{+}(\Vh_\perp\tpose A \Vh_\perp)$,
\begin{equation}\label{eqn:thm1}
\norm{\Vh\Lh \Vh\tpose - \project{+}(A)}_F^2 \le
\norm{R}_F^2
%\norm{A\Vh-\Vh\Lh}_F^2
 + \norm{\Vh_\perp\tpose A \Vh}_F^2 + \norm{D_+}_F^2.
\end{equation}

\end{theorem}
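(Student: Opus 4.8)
The plan is to reduce the general case to the clean situation sketched just before the theorem, where $\Lh = \Vh^T A \Vh$ and the $(2,2)$ block is negative semidefinite, and then invoke non-expansiveness of $\project{+}$ in the Frobenius norm. Write the orthogonal change of basis $Q = [\Vh\ \Vh_\perp]$ and set
\[
Q^T A Q = \begin{bmatrix} \Vh^T A \Vh & \Rh^T \\ \Rh & D \end{bmatrix},
\qquad D = \Vh_\perp^T A \Vh_\perp,\quad \Rh = \Vh_\perp^T A \Vh .
\]
The first step is to record the bookkeeping identities: $\|\Rh\|_F = \|\Vh_\perp^T A\Vh\|_F$, and the relation between $\Rh$ and the stated residual $R = A\Vh - \Vh\Lh$. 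Multiplying $R$ on the left by $Q^T$ gives $\Vh^T R = \Vh^T A \Vh - \Lh$ and $\Vh_\perp^T R = \Rh$, so $\|R\|_F^2 = \|\Vh^T A\Vh - \Lh\|_F^2 + \|\Rh\|_F^2$; in particular $\|\Rh\|_F \le \|R\|_F$, and if $\Lh = \Vh^T A \Vh$ then $\|R\|_F = \|\Rh\|_F$ (this is the content referenced as Theorem~\ref{thm:1}:\eqref{thm1:3}).

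The second step is to build a surrogate matrix $\Ah$ whose exact positive-semidefinite projection is $\Vh\Lh\Vh^T$ and which is close to $A$. Take
\[
\Ah = Q \begin{bmatrix} \Lh & 0 \\ 0 & \project{-}(D) \end{bmatrix} Q^T,
\]
replacing the $(1,1)$ block by the given $\Lh \succeq 0$ and the $(2,2)$ block by its negative-semidefinite part $\project{-}(D) = D - D_+$. Because $\Lh \succeq 0$ and $\project{-}(D) \preceq 0$ and the two blocks sit in orthogonal complementary subspaces, $\project{+}(\Ah) = \Vh\Lh\Vh^T$ exactly. Then
\[
\Ah - A = Q \begin{bmatrix} \Lh - \Vh^T A \Vh & -\Rh^T \\ -\Rh & -D_+ \end{bmatrix} Q^T ,
\]
so, using unitary invariance of $\|\cdot\|_F$ and the block structure,
\[
\|\Ah - A\|_F^2 = \|\Lh - \Vh^T A\Vh\|_F^2 + 2\|\Rh\|_F^2 + \|D_+\|_F^2 .
\]
Combining with $\|R\|_F^2 = \|\Vh^T A\Vh - \Lh\|_F^2 + \|\Rh\|_F^2$ yields
\[
\|\Ah - A\|_F^2 = \|R\|_F^2 + \|\Rh\|_F^2 + \|D_+\|_F^2 = \|R\|_F^2 + \|\Vh_\perp^T A \Vh\|_F^2 + \|D_+\|_F^2 ,
\]
which is exactly the right-hand side of \eqref{eqn:thm1}.

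The final step is the one-line conclusion: by non-expansiveness of the projection onto the semidefinite cone in the Frobenius norm \cite[Proposition 4.16]{Bauschke2011},
\[
\|\Vh\Lh\Vh^T - \project{+}(A)\|_F = \|\project{+}(\Ah) - \project{+}(A)\|_F \le \|\Ah - A\|_F ,
\]
and squaring gives \eqref{eqn:thm1}. I expect the only genuinely delicate point to be verifying $\project{+}(\Ah) = \Vh\Lh\Vh^T$ rigorously when $\Lh$ is merely positive semidefinite (possibly singular) and $D$ is indefinite: one must argue that the block-diagonal, mutually orthogonal decomposition of $\Ah$ into a PSD part supported on $\image(\Vh)$ and an NSD part supported on $\image(\Vh_\perp)$ is genuinely the eigenvalue split that defines $\project{+}$, i.e. that zero eigenvalues of $\Lh$ are correctly assigned and no cross terms spoil the characterization \eqref{eqn:projection}. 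Everything else is linear-algebra bookkeeping with the Pythagorean identity for the Frobenius norm on orthogonal blocks.
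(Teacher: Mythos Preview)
Your proof is correct and follows essentially the same approach as the paper: your surrogate $\Ah = Q\,\mathrm{diag}(\Lh,\project{-}(D))\,Q^T$ is exactly the paper's $B = \Vh\Lh\Vh^T + \Vh_\perp\project{-}(\Vh_\perp^T A\Vh_\perp)\Vh_\perp^T$, and the rest is the same nonexpansiveness-plus-block-Pythagoras computation. The ``delicate point'' you flag about $\project{+}(\Ah)=\Vh\Lh\Vh^T$ is in fact benign, since zero eigenvalues contribute zero to either side.
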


\begin{proof}
  Define
  \[
  B \eqdef \Vh\Lh \Vh\tpose + \Vh_\perp\project{-}(\Vh_\perp\tpose A\Vh_\perp)\Vh_\perp\tpose.
  \]

  Then
  \begin{equation}\label{thm1:1}
  \begin{aligned}
    \norm{\Vh\Lh \Vh\tpose - \project{+}(A)}^2_F &=
    \norm{\project{+}(B) - \project{+}(A)}^2_F \\
    &\le \norm{B - A}^2_F,
  \end{aligned}
\end{equation}
  where the inequality in the second line follows from the nonexpansiveness of the projection operator in the Frobenius norm~\cite[Proposition 4.16]{Bauschke2011}.  Since this norm is invariant with respect to unitary transformation, we have
\begin{equation}\label{thm1:2}
  \begin{aligned}
 \norm{B - A}^2_F &=
 \norm{\begin{bmatrix}\Vh\tpose \\ \Vh\tpose_\perp\end{bmatrix}(B - A)\begin{bmatrix}\Vh & \Vh_\perp\end{bmatrix}}^2_F \\
 &=
 \norm{
 \begin{bmatrix}
   \Lh - \Vh\tpose A\Vh & \Vh\tpose A \Vh_\perp \\
   \Vh_\perp\tpose A \Vh & - \project{+}(\Vh\tpose_\perp A \Vh_\perp)
 \end{bmatrix}
 }^2_F,
\end{aligned}
\end{equation}
  where the term in the lower right hand corner is formed using the identity $(I-\project{-}) = \project{+}$.  Considering next the norm of the residual $(A\Vh - \Vh\Lh)$ and applying a unitary transformation again, we have
  \begin{equation}\label{thm1:3}
\norm{R}_F^2=
\norm{A\Vh - \Vh\Lh}_F^2 =
 \norm{
 \begin{bmatrix}\Vh\tpose \\ \Vh\tpose_\perp\end{bmatrix}(A\Vh - \Vh\Lh)
 }_F^2 = \norm{\begin{bmatrix}\Vh\tpose A \Vh - \Lh \\ \Vh_\perp\tpose A \Vh\end{bmatrix}}_F^2.
\end{equation}
  The result then follows from combination of \eqref{thm1:1}, \eqref{thm1:2} and \eqref{thm1:3}.
\end{proof}

Theorem~\ref{thm:1} makes no assumption about the relationship of the matrix $A$ to the  matrix $\Vh\Lh \Vh\tpose$.   If we further assume that the latter matrix has been constructed from an approximation of $\project{+}(A)$ based on the Rayleigh-Ritz procedure, then we can go a bit further:

\begin{corollary}\label{cor:1}
Suppose that $(\Vh,\Lh)$ in Theorem~\ref{thm:1} satisfy the further relation
$
\Lh = \Vh\tpose A \Vh.
$
Then
\begin{equation}
\norm{\Vh\Lh \Vh\tpose - \project{+}(A)}_F^2 \le
%2\norm{A\Vh-\Vh\Lh}_F^2
2\norm{R}_F^2
 + \norm{D_+}_F^2. \label{eqn:cor1}
% + \norm{\project{+}(\Vh_\perp\tpose A \Vh_\perp)}_F^2.
\end{equation}
\end{corollary}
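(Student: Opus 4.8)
The plan is to obtain the corollary as an immediate specialization of Theorem~\ref{thm:1}, using the extra hypothesis $\Lh = \Vh\tpose A\Vh$ to collapse two of the three terms in~\eqref{eqn:thm1} into a single $\norm{R}_F^2$. The key observation is already contained in equation~\eqref{thm1:3} of the proof of Theorem~\ref{thm:1}: applying a unitary transformation to the residual $R = A\Vh - \Vh\Lh$ gives
\[
\norm{R}_F^2 = \norm{\Vh\tpose A\Vh - \Lh}_F^2 + \norm{\Vh_\perp\tpose A\Vh}_F^2 .
\]

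First I would invoke the hypothesis $\Lh = \Vh\tpose A\Vh$, which makes the first summand on the right vanish, so that $\norm{\Vh_\perp\tpose A\Vh}_F^2 = \norm{R}_F^2$. Then I would substitute this identity into the bound~\eqref{eqn:thm1} of Theorem~\ref{thm:1}, replacing $\norm{\Vh_\perp\tpose A\Vh}_F^2$ by $\norm{R}_F^2$, which yields
\[
\norm{\Vh\Lh \Vh\tpose - \project{+}(A)}_F^2 \le \norm{R}_F^2 + \norm{R}_F^2 + \norm{D_+}_F^2 = 2\norm{R}_F^2 + \norm{D_+}_F^2,
\]
as claimed. (One may also note $D_+ = \project{+}(\Vh_\perp\tpose A\Vh_\perp)$ is unchanged, since the definition of $D_+$ does not involve $\Lh$.)

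There is essentially no obstacle here: the corollary is a two-line bookkeeping consequence of the theorem, and the only thing to be careful about is correctly tracking which term in~\eqref{eqn:thm1} is identified with $\norm{R}_F$ under the Rayleigh--Ritz assumption — namely the off-diagonal block $\Vh_\perp\tpose A\Vh$, not the (now-zero) diagonal correction $\Lh - \Vh\tpose A\Vh$. If desired, one could additionally remark that~\eqref{eqn:cor1} recovers the clean bound~\eqref{eq:mainsimple} exactly when $D_+ = 0$, i.e.\ when $\Vh_\perp\tpose A\Vh_\perp \preceq 0$, tying the corollary back to the informal argument sketched at the start of Section~\ref{sec:main}.
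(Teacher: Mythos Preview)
Your proposal is correct and follows exactly the same approach as the paper's proof: use the hypothesis $\Lh = \Vh\tpose A\Vh$ to zero out the top block in~\eqref{thm1:3}, conclude $\norm{\Vh_\perp\tpose A\Vh}_F = \norm{R}_F$, and substitute into~\eqref{eqn:thm1}. The paper's version is terser (one sentence), but the content is identical.
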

\begin{proof}
The top block in the rightmost expression in \eqref{thm1:3} becomes zero by assumption, leaving the relation $\norm{A\Vh-\Vh\Lh} =
\norm{\Vh_\perp\tpose A \Vh
}$ to be applied in \eqref{eqn:thm1}.
\end{proof}

The term $\norm{D_+}_F$ can be bounded by approximate computation of the largest eigenvalues of $(\Vh_\perp)^T    A\Vh_\perp$ by e.g.~Lanczos (in which instead of applying the unknown $\Vh_\perp$, we can apply $I-\Vh_{+}\Vh_+^T$). With a stable computation, we expect $\norm{D_+}$ to be very small. It is identically zero if $\Vh_\perp$ contains `most' of the negative eigenspace of $A$, in that $\Vh_\perp^TA\Vh_\perp\preceq 0$:
%spans a subset of the negative
\begin{corollary} \label{cor:main_result}
Suppose that the assumptions of Theorem~\ref{thm:1} and Corollary~\ref{cor:1} hold, and in addition $\Vh_\perp^TA\Vh_\perp\preceq 0$.
%$\image({V_\perp}) \subseteq \image (\project{-}(A))$.
Then
\begin{equation}
\norm{\Vh\Lh \Vh\tpose - \project{+}(A)}_F^2 \le
%2\norm{A\Vh-\Vh\Lh}_F^2.
2\norm{R}_F^2.
 \ \label{eqn:cor2}
\end{equation}
\end{corollary}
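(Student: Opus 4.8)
The plan is to specialize Corollary~\ref{cor:1}, which already does all the real work. That corollary gives
$\norm{\Vh\Lh \Vh\tpose - \project{+}(A)}_F^2 \le 2\norm{R}_F^2 + \norm{D_+}_F^2$
with $D_+ = \project{+}(\Vh_\perp\tpose A\Vh_\perp)$, so the only thing left to check is that the extra hypothesis forces $\norm{D_+}_F = 0$.

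First I would note that $\Vh_\perp\tpose A\Vh_\perp \preceq 0$ means this Hermitian matrix has no positive eigenvalues. Writing its eigendecomposition in the form~\eqref{eqn:eigendecomposition}, the positive block $\Lambda_+$ is empty, so by the definition~\eqref{eqn:projection} of the projection we get $\project{+}(\Vh_\perp\tpose A\Vh_\perp) = 0$, i.e.\ $D_+ = 0$ and hence $\norm{D_+}_F = 0$. Substituting this into~\eqref{eqn:cor1} immediately yields $\norm{\Vh\Lh \Vh\tpose - \project{+}(A)}_F^2 \le 2\norm{R}_F^2$, which is~\eqref{eqn:cor2}.

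There is essentially no obstacle here; the content is entirely in Theorem~\ref{thm:1} and Corollary~\ref{cor:1}, and this corollary is just the clean endpoint. The one remark worth making is that the hypothesis $\Vh_\perp\tpose A\Vh_\perp \preceq 0$ is precisely the ``no missed negative directions'' condition alluded to before Corollary~\ref{cor:main_result}: it guarantees that the Schur-complement block $D = \Vh_\perp\tpose A\Vh_\perp$ is negative semidefinite, which is exactly the situation in which the informal argument $\project{+}(\Ah) = \Vh\Lh\Vh\tpose$ from the start of Section~\ref{sec:main} applies, recovering the advertised bound~\eqref{eq:mainsimple}.
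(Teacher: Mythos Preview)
Your proposal is correct and follows exactly the same approach as the paper's own proof: the additional hypothesis $\Vh_\perp\tpose A\Vh_\perp \preceq 0$ forces $D_+ = \project{+}(\Vh_\perp\tpose A\Vh_\perp) = 0$, so the term $\norm{D_+}_F^2$ in~\eqref{eqn:cor1} vanishes and~\eqref{eqn:cor2} follows immediately. The paper states this in a single sentence; your version simply spells out the eigendecomposition reasoning and adds contextual remarks, but there is no substantive difference.
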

\begin{proof}
Obvious since the assumption about $\Vh_\perp$ zeros the term being projected in \eqref{eqn:cor1}.
\end{proof}

It is noteworthy how concise the proofs are---perhaps verging on appearing trivial.
The key fact enabling this is the nonexpansiveness of the operator $\project{+}$ (which itself is not trivial to establish~\cite[Proposition 4.16]{Bauschke2011}),
along with the introduction of the auxiliary matrix $B$.
We also note that the above proof provides little insight into why a small gap does not harm the bound.
In Section~\ref{sec:main2} we present a first-principle derivation
for bounding~\eqref{eq:alte}, which demonstrates clearly why the bounds are independent of the gap.

Before concluding this subsection we present an example that establishes sharpness of Corollary \ref{cor:main_result} up to a constant smaller than 2:
\begin{example} \label{ex:sharpness}
Consider
  \begin{equation}
    A = \begin{bmatrix}
       1 & -1 \\
      -1 &  0
    \end{bmatrix}, \quad
    \Vh = \begin{bmatrix}
      1 \\
      0
    \end{bmatrix}, \; \text{and} \quad
    \Lh = \Vh^T A \Vh = [1],
  \end{equation}
  in which $R= \begin{bmatrix}0 \\ -1\end{bmatrix}$, $\Vh \Lh \Vh^T = \begin{bmatrix} 1 & 0 \\ 0 & 0\end{bmatrix}$, $\project{+}(A) =
    \begin{bmatrix}
      \sqrt{5} + 3 & -\sqrt{5} - 1\\
      -\sqrt{5} - 1 & 2
    \end{bmatrix}/(2\sqrt{5}),$ and
  $$
  \norm{\Vh\Lh \Vh\tpose - \project{+}(A)}_F^2/\norm{R}_F^2 \approx 1.2764 > 1.
  $$
\end{example}

\subsection{When $A$ is nearly positive definite}
In practice, projection onto the semidefinite cone can be done in two ways:
\begin{enumerate}[i)]
\item compute the \emph{positive} eigenpairs $\Vh,\Lh$ such that $\Vh\Lh \Vh^T\approx V_+\Lambda_+V_+^T$ (which we implicitly assumed in the above arguments), or
\item
compute the \emph{negative} eigenpairs $\Vh_-,\Lh_-$ such that
$\Vh\Lh \Vh^T\approx V_-\Lambda_-V_-^T$, and obtain the approximate projection as $A-\Vh_-\Lh_-\Vh_-^T$.
\end{enumerate}
The former approach is conceptually more straightforward and is efficient when $A$ has a small number of positive eigenvalues. By contrast, the latter approach is much more efficient when $A$ is nearly positive definite, with the number of negative eigenvalues being small relative to the matrix size $n$.

We expect the second situation to be equally common if not more, and it is therefore important to derive analogous bounds applicable in case (ii). Fortunately, this is a trivial extension. We note that
\begin{align*}
&\norm{
(A-\Vh_-\Lh_-\Vh_-^T)
%\Vh\Lh \Vh\tpose
- \project{+}(A)}_F^2
=\norm{
(A-\Vh_-\Lh_-\Vh_-^T)
- (A-V_-\Lambda_-V_-^T)}_F^2 \\
=&\norm{\Vh_-\Lh_-\Vh_-^T-V_-\Lambda_-V_-^T}_F^2
=\norm{\Vh_-\Lh_-\Vh_-^T-(\project{-}(A))}_F^2\\
=&\norm{\Vh_-\Lh_-\Vh_-^T-(-\project{+}(-A))}_F^2,
\end{align*}
which is the accuracy of $\Vh_-\Lh_-\Vh_-^T$ as an approximate projection of the matrix $-A$ onto the semidefinite cone. We can therefore invoke the above results with $A\leftarrow -A$ to obtain the following.

\begin{corollary}
Suppose that the matrix $A\in \Sym^n$, $\Vh_-$ is a $n \times k$ orthonormal matrix with $k \le n$ and $\Lh$ is a $k \times k$ negative semidefinite matrix.
% Let $\Vh_{-,\perp} \in \Re^{n\times (n-k)}$ be such that $[\Vh_-\ \Vh_{-,\perp}]$ is orthogonal.
Then writing $R = A\Vh_--\Vh_-\Lh_-$ and $D_-=\project{-}(\Vh_{-,\perp}\tpose A \Vh_{-,\perp})$,
\begin{equation}\label{eqn:thm1b}
\norm{(A-\Vh_-\Lh_- \Vh_-\tpose) - \project{+}(A)}_F^2 \le
\norm{R}_F^2
%\norm{A\Vh-\Vh\Lh}_F^2
 + \norm{\Vh_{-,\perp}\tpose A \Vh_-}_F^2 + \norm{D_-}_F^2.
\end{equation}
If in addition we have
$\Lh_- = \Vh_-\tpose A \Vh_-$, then
\begin{equation}
\norm{(A-\Vh_-\Lh_- \Vh_-\tpose) - \project{+}(A)}_F^2 \le
%2\norm{A\Vh-\Vh\Lh}_F^2
2\norm{R}_F^2
 + \norm{D_-}_F^2, \label{eqn:cor1b}
% + \norm{\project{+}(\Vh_\perp\tpose A \Vh_\perp)}_F^2.
\end{equation}
and if furthermore $\Vh_{-,\perp}^T A\Vh_{-,\perp}\succeq 0$,
then
\begin{equation}
\norm{(A-\Vh_-\Lh_- \Vh_-\tpose) - \project{+}(A)}_F^2 \le
%2\norm{A\Vh-\Vh\Lh}_F^2.
2\norm{R}_F^2.
 \ \label{eqn:cor2b}
\end{equation}
\end{corollary}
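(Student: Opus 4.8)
The plan is to obtain all three inequalities by a change of variables that reduces the statement to the already-established Theorem~\ref{thm:1}, Corollary~\ref{cor:1} and Corollary~\ref{cor:main_result}, applied with the substitution $A \leftarrow -A$, $\Vh \leftarrow \Vh_-$ and $\Lh \leftarrow -\Lh_-$. First I would verify that the hypotheses carry over: since $\Lh_-$ is negative semidefinite, $-\Lh_-$ is positive semidefinite, so $(\Vh_-, -\Lh_-)$ is an admissible pair for Theorem~\ref{thm:1} applied to $-A$, and the orthonormality of $\Vh_-$ together with $k\le n$ is unchanged.

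Next I would track how each quantity transforms under this substitution. The residual becomes $(-A)\Vh_- - \Vh_-(-\Lh_-) = -(A\Vh_- - \Vh_-\Lh_-) = -R$, so its Frobenius norm is unchanged; similarly $\Vh_{-,\perp}\tpose(-A)\Vh_- = -\Vh_{-,\perp}\tpose A \Vh_-$ is norm-invariant. Using $\project{+}(-X) = -\project{-}(X)$, the correction term $\project{+}\big(\Vh_{-,\perp}\tpose(-A)\Vh_{-,\perp}\big)$ equals $-\project{-}\big(\Vh_{-,\perp}\tpose A \Vh_{-,\perp}\big) = -D_-$, again norm-invariant. Finally, the left-hand side of~\eqref{eqn:thm1} becomes $\norm{\Vh_-(-\Lh_-)\Vh_-\tpose - \project{+}(-A)}_F = \norm{\Vh_-\Lh_-\Vh_-\tpose - \project{-}(A)}_F$, and by the chain of identities displayed just before the statement (subtract the common $A$ using $A = \project{+}(A)+\project{-}(A)$) this equals $\norm{(A - \Vh_-\Lh_-\Vh_-\tpose) - \project{+}(A)}_F$, the quantity appearing in~\eqref{eqn:thm1b}. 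Substituting these identifications into~\eqref{eqn:thm1} gives~\eqref{eqn:thm1b}.

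For the second inequality, the extra hypothesis $\Lh = \Vh\tpose A \Vh$ of Corollary~\ref{cor:1} becomes $-\Lh_- = \Vh_-\tpose(-A)\Vh_-$, i.e.\ $\Lh_- = \Vh_-\tpose A \Vh_-$, so~\eqref{eqn:cor1b} follows from~\eqref{eqn:cor1}. For the third, the hypothesis $\Vh_\perp^T A \Vh_\perp \preceq 0$ of Corollary~\ref{cor:main_result} becomes $\Vh_{-,\perp}\tpose(-A)\Vh_{-,\perp} \preceq 0$, i.e.\ $\Vh_{-,\perp}\tpose A \Vh_{-,\perp} \succeq 0$, which yields~\eqref{eqn:cor2b} from~\eqref{eqn:cor2}. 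There is no genuine obstacle here; the only point requiring care is the sign bookkeeping---in particular confirming that $-\Lh_-$ is the correct positive-semidefinite block to pass to the earlier results, and that the identities $\project{+}(-X) = -\project{-}(X)$ and $A = \project{+}(A)+\project{-}(A)$ are applied consistently throughout. Everything else is a direct transcription.
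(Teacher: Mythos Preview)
Your proposal is correct and matches the paper's approach exactly: the paper derives the corollary by invoking Theorem~\ref{thm:1} and Corollaries~\ref{cor:1} and~\ref{cor:main_result} with the substitution $A\leftarrow -A$, using the chain of identities displayed immediately before the statement to rewrite the left-hand side. Your write-up simply makes the sign bookkeeping more explicit than the paper does.
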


\subsection{Extension to other norms} \label{sec:other_norms}
As we have already mentioned in the Introduction, a closest positive definite matrix to $A \in \mathbb{S}^n$ can be obtained by \eqref{eqn:projection} for any unitarily invariant norm as a distance metric. This is shown in the following Lemma:
\begin{lemma} \label{lem:projection}
  Given $A \in \mathbb{S}^n$ with eigendecomposition defined in \eqref{eqn:eigendecomposition}, $V_{+} \Lambda_{+} V_+^T$ is a solution to $\min_{X \succeq 0}\norm{A-X}$ for any unitarily invariant norm.
\end{lemma}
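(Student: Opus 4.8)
\textbf{Proof proposal for Lemma \ref{lem:projection}.}

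The plan is to reduce the general unitarily invariant case to a statement about singular values via Mirsky's theorem (the unitarily invariant analogue of the Hoffman--Wielandt/Weyl inequality), and then verify optimality eigenvalue-by-eigenvalue. First I would record the eigenvalues of $A$ in nonincreasing order as $\lambda_1 \ge \dots \ge \lambda_k > 0 \ge \lambda_{k+1} \ge \dots \ge \lambda_n$, and observe that $V_+\Lambda_+V_+^T$ has eigenvalues $\lambda_1,\dots,\lambda_k,0,\dots,0$. For any competitor $X \succeq 0$ with eigenvalues $\mu_1 \ge \dots \ge \mu_n \ge 0$, the difference $A - X$ is Hermitian, so by Mirsky's theorem, for any unitarily invariant norm $\norm{\cdot}$ one has $\norm{A-X} \ge \norm{\diag(\lambda_i - \mu_i)_{i=1}^n}$, i.e.\ the norm is bounded below by the symmetric-gauge function applied to the sorted eigenvalue differences. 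The remaining task is to show this lower bound is minimized (over all admissible $\mu_i \ge 0$) by the choice $\mu_i = \max(\lambda_i, 0)$, which is exactly the spectrum of $V_+\Lambda_+V_+^T$; and separately that $V_+\Lambda_+V_+^T$ actually \emph{attains} the bound $\norm{\diag(\max(\lambda_i,0) - \lambda_i)} = \norm{\diag(\lambda_{k+1},\dots,\lambda_n,0,\dots,0)}$ (mapped back: $\norm{A - V_+\Lambda_+V_+^T} = \norm{V_-\Lambda_-V_-^T} = \norm{\Lambda_-}$), which is immediate from unitary invariance since $A - V_+\Lambda_+V_+^T = V_-\Lambda_-V_-^T$.

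The key step is the componentwise minimization. Let $\phi$ denote the symmetric gauge function associated with the norm, so $\norm{\diag(d_1,\dots,d_n)} = \phi(|d_1|,\dots,|d_n|)$, and $\phi$ is nondecreasing in each argument on the nonnegative orthant. For $i \le k$ we have $\lambda_i > 0$, and choosing $\mu_i = \lambda_i$ makes the $i$th entry $|\lambda_i - \mu_i| = 0$, which is clearly optimal since we cannot do better than zero. For $i > k$ we have $\lambda_i \le 0$; since $\mu_i \ge 0$ is forced, $|\lambda_i - \mu_i| = \mu_i - \lambda_i = \mu_i + |\lambda_i| \ge |\lambda_i|$, with equality at $\mu_i = 0$. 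However—and this is the subtlety—one is not free to prescribe the $\mu_i$ independently while simultaneously claiming equality in Mirsky's bound: Mirsky gives a lower bound for \emph{every} $X$, but we still must exhibit one $X \succeq 0$ that is simultaneously feasible and achieves $\phi$ of the entrywise-optimal vector. That $X$ is precisely $V_+\Lambda_+V_+^T$: it is PSD, its sorted eigenvalues are $\max(\lambda_i,0)$, and $A - X$ has eigenvalues exactly $\lambda_i - \max(\lambda_i,0) = \min(\lambda_i,0)$, so by unitary invariance $\norm{A-X} = \phi(|\min(\lambda_1,0)|,\dots,|\min(\lambda_n,0)|)$, matching the lower bound. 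Hence the infimum is attained there.

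I expect the main obstacle to be purely expository: stating Mirsky's theorem in a self-contained way (or citing it cleanly) and being careful that the sorted-difference bound is $\norm{A-X} \ge \phi(\lambda^{\downarrow}(A) - \mu^{\downarrow}(X))$ with the \emph{same} ordering on both spectra, rather than some interleaved pairing, since it is this aligned version that the componentwise argument needs. A secondary point worth a sentence is that the minimizer need not be unique when $A$ has a zero eigenvalue or repeated eigenvalues straddling $0$; the Lemma only claims $V_+\Lambda_+V_+^T$ \emph{is} a solution, so non-uniqueness causes no trouble. No delicate estimates are required beyond monotonicity of symmetric gauge functions, so the write-up should be short.
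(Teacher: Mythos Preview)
Your proposal is correct and follows essentially the same route as the paper: invoke Mirsky's inequality (the paper cites it as \cite[Corollary~7.4.9.3]{Horn2012}) to get $\norm{A-X}\ge \phi\bigl(|\lambda_i^\downarrow - \mu_i^\downarrow|\bigr)$, use monotonicity of the symmetric gauge function to reduce the right-hand side to $\phi(0,\dots,0,|\lambda_{k+1}|,\dots,|\lambda_n|)$, and then check that $X=V_+\Lambda_+V_+^T$ attains this value. The paper's write-up is simply a two-line chain of inequalities rather than your more discursive componentwise discussion, but the content is identical.
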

\begin{proof}
  Consider any $X \in \mathbb{S}^n_{+}$ and denote with $\alpha_1 \geq \dots \alpha_k \geq 0  \geq \alpha_{k+1} \geq \alpha_n$ the eigenvalues of $A$ and  $\chi_1 \geq \dots \chi_n \geq 0$ those of $X$.
  Then according to \cite[Corollary 7.4.9.3]{Horn2012} and following \cite[(7.4.9.2)]{Horn2012} we have:
\begin{align*}
  \norm{A - X} &\geq \norm{\diag(\alpha_1 - \chi_1, \dots, \alpha_n - \chi_n)} \\
  &\geq \norm{\diag(0, \dots, 0, \alpha_{k + 1} - \chi_{k + 1}, \dots, \alpha_n - \chi_n)} \\
  &\geq \norm{\diag(0, \dots, 0, \alpha_{k + 1}, \dots, \alpha_n)}.
\end{align*}
Thus $\norm{A - X} \geq \norm{\diag(0, \dots, 0, \alpha_{k}, \dots, \alpha_n)}$, with equality obtained in the final inequality by selecting $X = {V_{+} \Lambda_{+} V_+^T}$.
\end{proof}

Lemma \ref{lem:projection} has already been proven for the Frobenius and the spectral norm in the literature \cite{Higham1989}, but we are unaware of a proof that considers any unitarily invariant norm.

The projector to the semidefinite cone, i.e.~$\project{+}(A) \eqdef \text{argmin}_{X \succeq 0}\norm{A-X}$, is not unique in general (though it is for the Frobenius norm). Thus the projection error could be defined as $\min_{X \in \project{+}(A)}\norm{\Vt \Lt \Vt^T - X}_2$.
Nevertheless, for the rest of this section we consider the upper bound $\norm{\Vt \Lt \Vt^T - V_+ \Lambda_+ V_+^T}$ for reasons of simplicity and define $\project{+}(A)$ according to \eqref{eqn:projection}.

Perhaps surprisingly, the proof of Theorem~\ref{thm:1}
%(and the results)
does not carry over to other norms, including the spectral norm.
%\cite[Proposition 4.8]{Bauschke2011}
Specifically, the nonexpansiveness~\eqref{thm1:1}  of $\project{+}$
does not extend to %hold for
every unitarily invariant norm; simple computations reveal counterexamples for
e.g.~the spectral norm $\|A\|_2=\sigma_1(A)$ and trace norm $\|A\|_*=\sum_i\sigma_i(A)$.

Let us further investigate the spectral norm in particular. The issue is that for $n\geq 2$ there exist $A,B\in\mathbb{S}^n$ such that
 $\norm{\project{+}(A) - \project{+}(B)}_2 > \norm{A - B}_2$.
An example is $$A =
\begin{bmatrix}
M &1\\ 1 & 1/M
\end{bmatrix},
\quad
B =
\begin{bmatrix}
     M+1   &  0\\
     0   &  -1+1/M
\end{bmatrix},
$$
which as $M\rightarrow \infty$ gives $   \norm{\project{+}(A) - \project{+}(B)}_2/\norm{A - B}_2 \rightarrow \frac{\sqrt{5}+1}{2\sqrt{2}}\approx 1.1441$.
%\bb{YN thinks this might be the most extreme example with the largest possible ratio.}

The potential expansiveness of $\project{+}$ might be explained as follows. Let $[V_{B_+} \; V_{B_-}]$ be the orthogonal matrix of $B$'s eigenvectors such that
$$B = \begin{bmatrix}
  V_{B_+}\;\; V_{B_-}
\end{bmatrix}
\begin{bmatrix}
  \Lambda_{B_+} \\ &   \Lambda_{B_-}
\end{bmatrix}
\begin{bmatrix}
  V_{B_+}\;\; V_{B_-}
\end{bmatrix}^T,$$ where $\Lambda_{B_+}\!\succ 0, \Lambda_{B_-} \!\preceq 0$ are diagonal.
Then $\project{+}(B)
= [V_{B_+} \; V_{B_-}]
\big[
\begin{smallmatrix}
\Lambda_{B_+} \\ &  0
\end{smallmatrix}
\big][V_{B_+} \; V_{B_-}]^T$. Supposing $A \succeq 0$,
write $A = [V_{B_+} \; V_{B_-}]
\biggl[
{
\begin{smallmatrix}
\At_{11} & \At_{12} \\[1ex]
\At_{21} & \At_{22}
\end{smallmatrix}
}
\biggr][V_{B_+} \; V_{B_-}]^T$. Thus
\[
 \norm{\project{+}(A) - \project{+}(B)}_2
 = \norm{
   \begin{bmatrix}
\At_{11}-  \Lambda_{B_+} & \At_{12} \\
\At_{21} & \At_{22}
   \end{bmatrix}}_2,
\]
and potential expansiveness of $\project{+}$ means that this might be larger than $$\norm{A - B}_2 =
\norm{ \begin{bmatrix}
\At_{11}-  \Lambda_{B_+} & \At_{12} \\
\At_{21} & \At_{22} -\Lambda_{B_-}
   \end{bmatrix}}_2,$$ which is counterintuitive
as we clearly have $\|\At_{22} \|_2\leq \|\At_{22} -\Lambda_{B_-}\|_2$, as $\At_{22}$ and $-\Lambda_{B_-}$ are both positive semidefinite.
This %somewhat surprising
 fact is related to a classical result by Davis, Kahan and Weinberger~\cite{Davis1982} on norm-preserving dilation, which implies that
it is possible for the strict inequality
\begin{equation}
  \label{eqn:norm-dilation}
  \left\|
  \begin{bmatrix}
    X_{11}&  X_{12}\\X_{21}& X_{22}
    \end{bmatrix}\right\|_2<
  \left\|
    \begin{bmatrix}
    X_{11}&  X_{12}\\X_{21}& 0
    \end{bmatrix}
  \right\|_2
\end{equation}
to hold, even when the matrices are symmetric. For example, consider the case where $\At_{22}$  is negligible relative to $-\Lambda_{B_-}$. Then the above comparison essentially reduces to that of
$\norm{
\biggl[
\begin{smallmatrix}
\At_{11}-  \Lambda_{B_+} & \At_{12} \\
\At_{21} & 0
\end{smallmatrix}
\biggr]}_2$ and $  \norm{
\biggl[
\begin{smallmatrix}
\At_{11}-  \Lambda_{B_+} & \At_{12} \\
\At_{21} & -\Lambda_{B_-}
\end{smallmatrix}
\biggr]}_2$, which is in the form treated by Davis-Kahan-Weinberger.

Note that while
it is possible for \eqref{eqn:norm-dilation} to hold, we still have
$$\left\|
\begin{bmatrix}
   X_{11}&  X_{12}\\X_{21}& 0
\end{bmatrix}
\right\|_2<
\sqrt{\|[X_{11}\ X_{12}]\|_2^2 + \|[X_{21}\ 0]\|_2^2}
\leq
\sqrt{2}\left\|
\begin{bmatrix}
  X_{11}&  X_{12}\\X_{21}& X_{22}
\end{bmatrix}\right\|_2.$$
Similarly, for any $A \succeq 0$ we have
\begin{align*}
%\norm{\At - B}_2
 \norm{A - \project{+}(B)}_2
& = \norm{
   \begin{bmatrix}
\At_{11}-  \Lambda_{B_+} & \At_{12} \\
\At_{21} & \At_{22}
   \end{bmatrix}}_2\\
&\leq \sqrt{\|[\At_{11}-  \Lambda_{B_+} ,  \At_{12}]\|_2^2
+\|[\At_{21} , \At_{22}] \|_2^2}\\
&\leq \sqrt{\|[\At_{11}-  \Lambda_{B_+} ,  \At_{12}]\|_2^2
+\|[\At_{21} , \At_{22}-  \Lambda_{B_-}] \|_2^2}\\
&\leq
 \sqrt{2}
\norm{ \begin{bmatrix}
\At_{11}-  \Lambda_{B_+} & \At_{12} \\
\At_{21} & \At_{22} -\Lambda_{B_-}
   \end{bmatrix}}_2   = \sqrt{2}\|A - B\|_2.
\end{align*}
We conclude that $\norm{\project{+}(A) - \project{+}(B)}_2 \leq \sqrt{2}\norm{\project{+}(A) - B}_2$ for any $A, B \in \mathbb{S}^{n \times n}$, which shows quasi-nonexpansiveness of $\frac{1}{\sqrt{2}}\project{+}$ in the spectral norm \cite{Bauschke2011}. Unfortunately, this result is not sufficient to derive a bound like \eqref{eqn:cor2} using the techniques of Section \ref{sec:main}.

\ignore{
\hrule
We use this to derive bounds of the form
$   \norm{\project{+}(A) - \project{+}(B)}_2 \leq (1+\delta) \norm{A - B}_2$ for some $\delta>0$.
 By the triangle inequality (note that $A\succeq 0$ is not assumed)
\rr{Error here, final inequality does not hold since neither of $A,B$ is assumed posdef. probably remove between two hrules.}
\begin{equation}
  \label{eq:2normresult}
 \norm{\project{+}(A) - \project{+}(B)}_2 \leq  \norm{\project{+}(A) - B}_2 + \norm{B-A}_2+ \norm{A- \project{+}(B)}_2
\leq (1+2\sqrt{2})\norm{A-B}_2.
\end{equation}
This inequality is clearly loose and it is an open problem to obtain a tight bound
for $\delta$ such that $ \norm{\project{+}(A) - \project{+}(B)}_2 \leq (1+\delta)\norm{A-B}_2$ for any pair of symmetric matrices $A,B$.
On the other hand,~\eqref{eq:2normresult} still suffices to
yield (together with the proof of Theorem~\ref{thm:1}) a gap-free bound in the spectral norm

\begin{equation}
  \label{eqs:pec}
    \norm{\Pi_{+}(A) - \Vh \Lambda_+ \Vh^T}_2
    \leq
    (1+2\sqrt{2})(\norm{R}_2 + \norm{D_+}_2).
\end{equation}
\hrule
}

% As mentioned after Theorem~\ref{thm:projection_accuracy},
Nevertheless, based on experimental evidence we conjecture that, when $\Vh^T A \Vh \succeq 0$, we have
%the above bound~\eqref{eqs:pec} holds with $1+2\sqrt{2}$ replaced by $1$.
$    \norm{\Vh \Lh \Vh^T - \Pi_{+}(A)}_2^2
    \leq  \delta \norm{R}_2^2$ for a ``small'' constant $\delta$, perhaps $\delta = 2$. Note that the counterexample \ref{ex:sharpness} gives
    $$
      \norm{\Vh\Lh \Vh\tpose - \project{+}(A)}_2^2/\norm{R}_2^2 \approx 1.0935 > 1,
    $$
    thus the conjectured constant $\delta$ has to be larger than $1$.
%holds for any unitarily invariant norm (in particular spectral norm, which we discuss below), and that the constant $\sqrt{2}$ can be improved to $1$ (when $D_+$ is empty).

%\[
%%\norm{A - B}_2
%% = \norm{   \begin{bmatrix}     A_{11}-  \Lambda_{B_+} & A_{12} \\A_{21} & A_n{22}    \end{bmatrix}}_2.
%\]
%also shows that if $A\succeq 0$, then $   \norm{\project{+}(A) - \project{+}(B)}_2 \leq \sqrt{2} \norm{A - B}_2$.

%Consider for the moment (instead of $ \norm{\project{+}(A) - \project{+}(B)}_2 $)

%Using Theorem \ref{thm:projection_accuracy} we can see that Algorithm \ref{alg:approximate_projection} produces projections with errors that are summable.

\section{Bounding $\|A(V_+V_+^T-\Vh\Vh^T)\|_F  $}\label{sec:main2}
We now turn to the alternative measure~\eqref{eq:alte} for the projection accuracy.
First note that from $AV_+=V_+\Lambda_+$
and
$A\Vh=\Vh\Lh  + R$
we have
\[
AV_+V_+^T = V_+\Lambda_+ V_+^T, \qquad A\Vh\Vh^T=\Vh\Lh  \Vh^T + R\Vh^T,
\]
and hence
\[
%W =
 AV_+V_+^T-A\Vh\Vh^T = V_+\Lambda_+ V_+^T-\Vh\Lh  \Vh^T - R\Vh^T.
\]
Therefore we have
\begin{equation}  \label{eq:Wdef}
\|V\Lambda_+V^T-\Vh\Lh \Vh^T\|-
 \|R\|   \leq  \|A(V_+V_+^T-\Vh\Vh^T)\| \leq \|V\Lambda_+V^T-\Vh\Lh \Vh^T\|
%\|W\|
 +\|R\|.
\end{equation}

The two accuracy measures
$\|V\Lambda_+V^T-\Vh\Lh \Vh^T\|$ and
$\|A(V_+V_+^T-\Vh\Vh^T)\|$ are therefore at most $\|R\|$ apart; this immediately
gives the bound
$ \|A(V_+V_+^T-\Vh\Vh^T)\|_F   \leq    \sqrt{2\norm{R}_F^2 + \norm{D_+}_F^2}+\norm{R}_F$
 as a corollary of~Theorem~\ref{thm:1}.

Here we follow a different argument to directly bound $\|A(V_+V_+^T-\Vh\Vh^T)\|$,
producing a tighter result.
% in terms of $\|R\|_F$.
%a neater result with a smaller constant.
Moreover, while the proof is longer than in Theorem~\ref{thm:1},
it clearly reveals how the gap-independence comes about.

In what follows we assume $\Lh $ is obtained by the Rayleigh-Ritz process, i.e., $\Lh =\Vh^TA\Vh$. Furthermore, we define $[\lambda_1\; \dots \; \lambda_n]^T \eqdef [\diag(\Lambda_+)^T\; \diag(\Lambda_-)^T]^T$ and $[v_1 \; \dots v_n] \eqdef [V_+ \; V_-]$.
% In view of this, to bound $\|W\|$ we attempt to bound $\|A(V_+V_+^T-\Vh\Vh^T)\|$.
\begin{theorem}
Under the notation and assumptions in Theorem~\ref{thm:1},
\begin{equation}
  \label{eq:VVTbound}
  \|A(V_+V_+^T-\Vh\Vh^T)\|_F^2\leq
2\|R\|_F^2+2 \|R\|_F\|D_+\|_F+ \|D_+\|_F^2
\leq 2(\|R\|_F + \|D_+\|_F)^2.
%\leq 2\|R\|_F + \|D_+\|_F.
%\|A(V_+V_+^T-\Vh\Vh^T)\|_F\leq
%\sqrt{2\|R\|_F^2+2 \|R\|_F\|D_+\|_F+ \|D_+\|_F^2}\leq 2\|R\|_F + \|D_+\|_F
\end{equation}
\end{theorem}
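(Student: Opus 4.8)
The plan is to pass to the eigenbasis of $A$ and split $A(V_+V_+^T-\Vh\Vh^T)$ into two Frobenius-orthogonal pieces. Write $A=V\Lambda V^T$ with $V=[V_+\ V_-]$ orthogonal as in the theorem and abbreviate $A_+\eqdef\project{+}(A)=V_+\Lambda_+V_+^T$ and $A_-\eqdef\project{-}(A)=V_-\Lambda_-V_-^T$, so $A=A_++A_-$, $A_+\succeq0\succeq A_-$, and $A_+A_-=0$. Using $AV_+V_+^T=A_+$, $A\Vh\Vh^T=A_+\Vh\Vh^T+A_-\Vh\Vh^T$ and $I-\Vh\Vh^T=\Vh_\perp\Vh_\perp^T$,
\[
A\bigl(V_+V_+^T-\Vh\Vh^T\bigr)=A_+\Vh_\perp\Vh_\perp^T-A_-\Vh\Vh^T .
\]
The column spaces of the two summands lie in those of $V_+$ and $V_-$ respectively, hence are orthogonal; combined with the fact that $\Vh,\Vh_\perp$ have orthonormal columns, this yields the Pythagorean identity
\[
\|A(V_+V_+^T-\Vh\Vh^T)\|_F^2=\|A_+\Vh_\perp\|_F^2+\|A_-\Vh\|_F^2=\|\Lambda_+V_+^T\Vh_\perp\|_F^2+\|\Lambda_-V_-^T\Vh\|_F^2 .
\]
It then suffices to show $\|\Lambda_-V_-^T\Vh\|_F\le\|R\|_F$ and $\|\Lambda_+V_+^T\Vh_\perp\|_F\le\|R\|_F+\|D_+\|_F$, since adding their squares gives exactly $2\|R\|_F^2+2\|R\|_F\|D_+\|_F+\|D_+\|_F^2$; the last inequality of the statement is then immediate.

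For the first bound, set $W_-=V_-^T\Vh$ and left-multiply $R=A\Vh-\Vh\Lh$ by $V_-^T$, using $V_-^TA=\Lambda_-V_-^T$, to obtain $V_-^TR=\Lambda_-W_--W_-\Lh$. Then
\[
\|R\|_F^2\ \ge\ \|V_-^TR\|_F^2=\|\Lambda_-W_-\|_F^2-2\tr\!\bigl(W_-^T\Lambda_-W_-\,\Lh\bigr)+\|W_-\Lh\|_F^2\ \ge\ \|\Lambda_-W_-\|_F^2 ,
\]
because $W_-^T\Lambda_-W_-\preceq0$ and $\Lh\succeq0$ make the trace term nonpositive.

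For the second bound — the crux — put $X=V_+^T\Vh_\perp$, $D=\Vh_\perp^TA\Vh_\perp$ and $D_-=\project{-}(D)=D-D_+$. From the Rayleigh--Ritz relation $\Lh=\Vh^TA\Vh$ we get $\Vh^TR=0$, hence $R=\Vh_\perp\Rh$ with $\Rh=\Vh_\perp^TA\Vh$, $\|\Rh\|_F=\|R\|_F$, and $A\Vh_\perp=\Vh\Rh^T+\Vh_\perp D$. Left-multiplying $A\Vh_\perp-\Vh_\perp D=\Vh\Rh^T$ by $V_+^T$ (a contraction) and using $V_+^TA=\Lambda_+V_+^T$ gives $\|\Lambda_+X-XD\|_F\le\|\Vh\Rh^T\|_F=\|R\|_F$. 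The key step is to subtract $XD_+$ rather than $XD$: expanding the square and noting $\tr(D_+X^TXD_-)=0$ (because $D_+D_-=0$),
\[
\|\Lambda_+X-XD\|_F^2=\|\Lambda_+X-XD_+\|_F^2-2\tr\!\bigl(X^T\Lambda_+X\,D_-\bigr)+\|XD_-\|_F^2\ \ge\ \|\Lambda_+X-XD_+\|_F^2 ,
\]
since $\tr(X^T\Lambda_+X\,D_-)\le0$ ($X^T\Lambda_+X\succeq0$, $D_-\preceq0$). Hence $\|\Lambda_+X-XD_+\|_F\le\|R\|_F$, and as $\|X\|_2\le1$ (a submatrix of an orthogonal matrix),
\[
\|\Lambda_+V_+^T\Vh_\perp\|_F=\|\Lambda_+X\|_F\le\|\Lambda_+X-XD_+\|_F+\|XD_+\|_F\le\|R\|_F+\|D_+\|_F .
\]

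I expect the genuine obstacle to be exactly this last bound: the naive split $\|\Lambda_+X\|_F\le\|\Lambda_+X-XD\|_F+\|XD\|_F$ only yields the factor $\|XD\|_F\le\|D\|_F$, and $\|D\|_F$ may greatly exceed $\|D_+\|_F$. Subtracting $XD_+$ instead, so that the leftover $XD_-$ enters only through a cross-term rendered harmless by $D_+D_-=0$, is what produces the sharp additive error $\|D_+\|_F$; conceptually this is where gap-independence surfaces, since the ``dangerous'' directions of $\Vh_\perp$ (those aligned with eigenvectors of small $|\lambda_i|$) are damped precisely by $\Lambda_+$. The rest is bookkeeping with the orthogonal $2\times2$ block partition of $[V_+\ V_-]^T[\Vh\ \Vh_\perp]$.
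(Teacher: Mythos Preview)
Your proof is correct. The overall scaffolding---the orthogonal splitting
\[
\|A(V_+V_+^T-\Vh\Vh^T)\|_F^2=\|\Lambda_+V_+^T\Vh_\perp\|_F^2+\|\Lambda_-V_-^T\Vh\|_F^2
\]
---is exactly the paper's starting identity (obtained there by the unitary conjugation in equation~\eqref{eq:AXXQQ}). Where you diverge is in how each block is bounded. The paper argues \emph{row by row}: for each eigenvector $v_i$ it writes $v_i^T\Vh_\perp(\lambda_iI-\Dt)=v_i^T(\Vh_\perp[\begin{smallmatrix}D_+&\\&0\end{smallmatrix}]+\Rh)$ (after diagonalising $D$), inverts, and observes that multiplying by $\lambda_i$ kills the $1/\lambda_i$ from $(\lambda_iI-\Dt)^{-1}$; this is where gap-independence is made visible. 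You instead use global trace inequalities: the sign of the cross-term $\tr(W_-^T\Lambda_-W_-\Lh)$ (resp.\ $\tr(X^T\Lambda_+X\,D_-)$) is controlled by the semidefiniteness of $\Lambda_-,\Lh$ (resp.\ $\Lambda_+,D_-$), and the potentially troublesome mixed term $\tr(D_+X^TXD_-)$ vanishes because $D_+D_-=0$.

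What each approach buys: yours is coordinate-free (no need to assume $D$ diagonal) and slightly slicker; it also makes clear that only the Rayleigh--Ritz identity $\Lh=\Vh^TA\Vh$ and the signs $\Lh\succeq0$, $D_-\preceq0$ are used. The paper's row-wise argument, while longer, exposes the $\lambda_i\cdot(1/\lambda_i)$ cancellation explicitly, which is the intuitive explanation for gap-independence advertised in the introduction. Both routes yield the identical sharp constants $\|\Lambda_-V_-^T\Vh\|_F\le\|R\|_F$ and $\|\Lambda_+V_+^T\Vh_\perp\|_F\le\|R\|_F+\|D_+\|_F$.
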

\begin{proof}
Following the arguments in~\cite[\S 2.5.3]{Golub2013} we have
for any unitarily invariant norm
\begin{align}
\|A(V_+V_+^T-\Vh\Vh^T)\|&=\left\|[V_+ \; V_-]
\begin{bmatrix}
\Lambda_+ \\ & \Lambda_-
\end{bmatrix}\bigl[V_+ \;\; V_-\bigr]^T
 (V_+V_+^T-\Vh\Vh^T)\bigl[\Vh \;\; \Vh_\perp\bigr]\right\| \nonumber \\
&=\left\|
\begin{bmatrix}
\Lambda_+ &0 \\
0 & \Lambda_-
\end{bmatrix}
\begin{bmatrix}
0&  V_+^T\Vh_\perp \\
(V_-)^T\Vh&0
\end{bmatrix}\right\|. \label{eq:AXXQQ}
\end{align}
Let us examine the size of the $i$th row of the $(1,2)$ block of the rightmost matrix term, i.e.\ $\|v_i^T\Vh_\perp\|_2$. This is precisely the sine of the angle between $\Vh$ and $v_i$. Assume, without loss of generality, that $\Vh_\perp$ was chosen such that $A\Vh_\perp=\Vh_\perp D + \Rh$ where
%$\Lh^\perp\prec 0$
$  D =
\big[
\begin{smallmatrix}
D_+ & \\ & D_-
\end{smallmatrix}
\big]$
and
$  \Dt =
\big[
\begin{smallmatrix}
0 & \\ & D_-
\end{smallmatrix}
\big]$,
is diagonal
% as in Theorem~\ref{thm:1},
%(looks like we need to assume this; this is natural but does not necessarily follow from our $k=k$ assumption)
and $\|\Rh\|= \|R\|$ in any unitarily invariant norm. Then,
\[
v_i^TA\Vh_\perp=\lambda_iv_i^T\Vh_\perp
= v_i^T\Vh_\perp D + v_i^T\Rh
= v_i^T\Vh_\perp \Dt + v_i^T\Vh_\perp
\begin{bmatrix}
  D_+ & \\& 0
\end{bmatrix}
%D_+
 + v_i^T\Rh.
\]
%where $\Vh = [\Vh_{+,+},\Vh_{+,-}]$.
%$\Vh = [\Vh_{+,+},\Vh_{+,-}]$.
Hence we have
\[
v_i^T\Vh_\perp(\lambda_iI-\Dt) = v_i^T(\Vh_\perp
\begin{bmatrix}
D_+ & \\ & 0
\end{bmatrix}
 + \Rh ),
\]
thus  $v_i^T\Vh_\perp=
v_i^T(\Vh_\perp
\big[\begin{smallmatrix}
D_+ & \\ & 0
\end{smallmatrix}\big]
 + \Rh)(\lambda_iI-\Dt)^{-1} $ and the inverse is guaranteed to exist since $\lambda_i$ is necessarily positive. Therefore,
%v_i^T\Rh(D_--\lambda_iI)^{-1}$, so
using the fact $\Dt\preceq 0$ we obtain
\begin{equation}  \label{eq:xiQ}
\|v_i^T\Vh_{\perp}\|_2
\leq \frac{\|v_i^T(\Vh_\perp \big[\begin{smallmatrix}
D_+ & \\ & 0
\end{smallmatrix}\big]
+\Rh)\|_2}{\min_j\Bigl|\lambda_i-\Dt_{j,j}\Bigr|}
\leq \frac{\|v_i^T(\Vh_\perp \big[\begin{smallmatrix}
D_+ & \\ & 0
\end{smallmatrix}\big]
+\Rh)\|_2}{\lambda_i}.
\end{equation}

%Hence Now each row of $X^TQ^\perp$ is bounded in norm by $\|v_i^TR\|_2/\lambda_i$ (for $i=1,\ldots,k$),
Since this holds for $i=1,\ldots,k$, it follows that the $i$th row of $\Lambda_+ V_+^T\Vh_\perp$ is bounded by
$\|v_i^T(\Vh_\perp \big[\begin{smallmatrix}
D_+ & \\ & 0
\end{smallmatrix}\big]
+\Rh)\|_2$.
%\leq \|v_i^T\Vh_+^\perp D_+\|_2+\|v_i^T\Rh\|_2$.
 Note how the $\Lambda_{+,i,i}=\lambda_i$ and
$1/\lambda_i$ terms neatly cancel out; this is why the result is gap-independent.
Together with the bounds for $i=1,\ldots,k$, we obtain
\begin{equation}  \label{eq:12term}
\|\Lambda_+ V_+^T\Vh_\perp\|_F\leq
\|V_+^T(\Vh_\perp
\big[\begin{smallmatrix}
D_+ & \\ & 0
\end{smallmatrix}\big]+\Rh)\|_F\leq \|\Vh_\perp \big[\begin{smallmatrix}
D_+ & \\ & 0
\end{smallmatrix}\big]+\Rh\|_F\leq \|D_+\|_F+\|\Rh\|_F,
\end{equation}
where we used the fact that $V_+$ %and $\Vh_{+,2}^\perp$
has orthonormal columns, hence $\|V_+^TX\|_F\leq \|X\|_F$ for any matrix $X$.
%It follows that the (1,2) block $ V_+^T\Vh_\perp$ of~\eqref{eq:AXXQQ} has Frobenius norm bounded by $\| D_+\|_F+\|\Rh\|_F$

Similarly, we examine
the $i$th row of $V_-^T\Vh$, which appears
in the $(2,1)$ block of~\eqref{eq:AXXQQ}.
From $A\Vh=\Vh\Lh+  R$ we have
\[
v_{k+i}^TA\Vh=
\lambda_{k+i}v_{k+i}^T\Vh
= v_{k+i}^T(\Vh\Lh +  R).
%= v_{k+i}^T(\Vh\Dt + \Vh\big[\begin{smallmatrix}D_+ & \\ & 0  \end{smallmatrix}\big]+ +  R).
\]
%Hence defining $\Dt \equiv  \begin{bmatrix}0 & \\ & D_-   \end{bmatrix}$ as in~\eqref{eq:dtilde},
%$\lt_{k+i}:=\min(0,\lambda_{k+i})$,
Hence we have
\[
v_{k+i}^T\Vh (\lambda_{k+i}I-\Lh)=
v_{k+i}^TR,
%  -\max(0,\lambda_{k+i}) v_{k+i}^T\Vh.
\]
so $v_{k+i}^T\Vh=v_{k+i}^TR(\lambda_{k+i}I-\Lh)^{-1}$, giving
\[
\|v_{k+i}^T\Vh\|\leq \frac{\|v_{k+i}^TR\|}{\min_j\Bigl|\lambda_{k+i}-\Lt_{+,j,j}\Bigr|}\leq \frac{\|v_{k+i}^TR\|}{\|\lambda_{k+i}|}.
\]
Here we have used the facts that  $\Lh\succ 0$ and $\lambda_{k+i}\leq 0$;
we assumed $\lambda_{k+i}< 0$, which can be done because the terms with $\lambda_{k+i}=0$ do not contribute to the terms in~\eqref{eq:AXXQQ}.
%each row of $(X^\perp)^TQ$ is bounded by $\|R\|_2/|\lambda_i|$ (for $i=k+1,\ldots,n$; note $\lambda_i<0$). It therefore follows that
Since this holds for $i=k+1,\ldots,n$,
the $(k+i)$th row of \eqref{eq:AXXQQ} is bounded in norm by
$\|v_{k+i}^TR\|_2$. Thus
%$(2,1)$ block can be bounded by
\begin{equation}  \label{eq:21}
\|\Lambda_-V_-^T\Vh\|_F\leq \|R\|_F  .
\end{equation}

%every row in the above expression~\eqref{eq:AXXQQ}
%It follows that for every $i\in\{1,\ldots,n\}$, the $i$th row of~\eqref{eq:AXXQQ} has norm bounded by $\|v_i^TR\|_2$. We therefore see that the Frobenius norm of the entire matrix is bounded as $\|A(V_+V_+^T-\Vh\Vh^T)\|_F\leq \|R\|_F$, as required.
Putting ~\eqref{eq:AXXQQ},~\eqref{eq:12term} and \eqref{eq:21}  together, we obtain
\[
\|A(V_+V_+^T-\Vh\Vh^T)\|_F\leq
\sqrt{2\|R\|_F^2+2 \|R\|_F\|D_+\|_F+ \|D_+\|_F^2},
\]
as required.
It is easy to see the final expression is upper bounded by $\sqrt{2}(\|R\|_F + \|D_+\|_F)$.
%[\bb{YN}: the old note had an error: $V_+^T\Rh$ and $(V_+^\perp)^TR$ do not lie in orthogonal complements so we cannot say $\|V_+^T\Rh\|_F^2+\|(V_+^\perp)^TR\|_F^2 = \|R\|_F^2$;  indeed $\|V_+^T\Rh\|_F=\|(V_+^\perp)^TR\|_F=\|R\|_F$ by construction].
\end{proof}

\ignore{
Together with \eqref{eq:Wdef} we conclude that
\begin{equation}
  \label{eq:conclusionF}
\|V_+\Lambda V_+^T-\Vh\Lh \Vh^T  \|_F \leq 2\|R\|_F.
\end{equation}
}

%Note that for the spectral norm, we have the bound $\|V_+\Lambda V_+^T-\Vh\Lh \Vh^T  \|_2 \leq (1+\sqrt{k})\|R\|_2$---which is better than the trivial bound $(1+\sqrt{n})\|R\|_2$---because the nonzero blocks in \eqref{eq:AXXQQ} have rank bounded by $k$.
%(gap independent, yay! But the constant $\sqrt{n}$ is terrible. )

\section{Experiments}
In this section we demonstrate the results of this paper with a set of experiments. We begin with an experiment that demonstrates the independence of the projection accuracy on the spectral gap. We consider a set of parametric matrices
$A \in \mathbb{S}^{500}$ with $20$ eigenvalues geometrically distributed on $[10^{-10}, 1]$, one at $\epsilon$, where $\epsilon$ is a parameter which is used to control the spectral gap, one at $-\epsilon$ and the rest uniformly distributed on $[-1, 0]$. We compute approximate projections by running \texttt{ARPACK} \cite{Lehoucq1998} (accessed via \texttt{MATLAB}'s \texttt{eigs} with its default parameters) on $A$ requesting the $25$ largest eigenpairs and returning $\Vh \Lh \Vh^T$, where $(\Vh, \Lh)$ are the positive Ritz pairs obtained by \texttt{ARPACK}. We let \texttt{ARPACK} iterate until convergence (set to its default value, $10^{-14}$) and we do this for varying values of $\epsilon$. In Figure \ref{fig:gap_independence} we plot the resulting projection errors as a function of the spectral gap. We observe that the projection accuracy is not impaired by small spectral gaps, and that the bound of Corollary \ref{cor:1} successfully estimates the high accuracy of the projection.
\begin{figure}
	\centering
	\includegraphics[width=.49\textwidth]{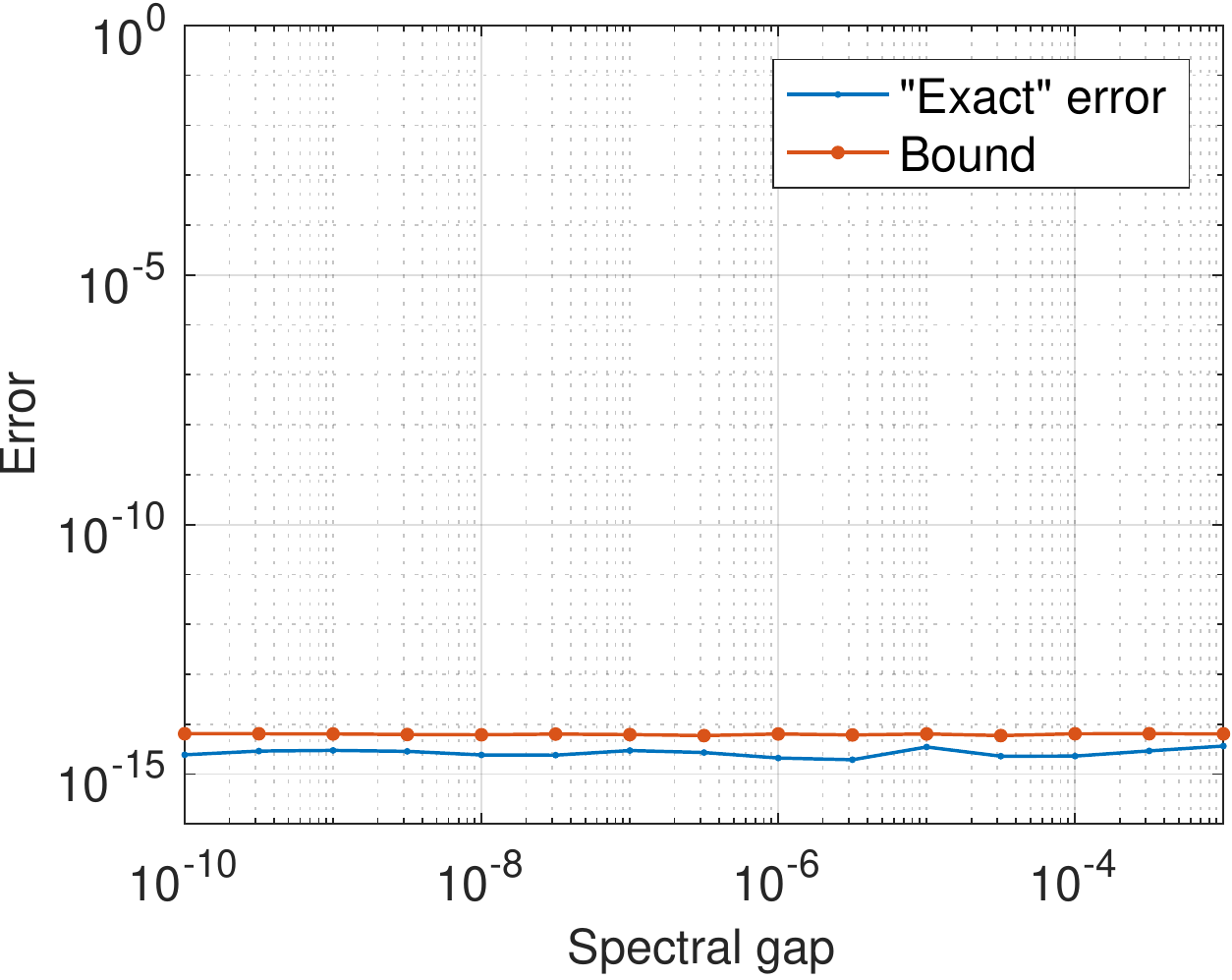}
  \caption{Left: ``Exact'' projection error $\norm{V_+ \Lambda_+ V_+^T - \Vh \Lh \Vh^T}_F$, where $V_+, \Lambda_+$ are obtained via a full eigenvalue decomposition and $\Vh, \Lh$ are computed with \texttt{ARPACK}, and the respective bound obtained via Corollary \ref{cor:1} for problems with varying spectral gap.}
  \label{fig:gap_independence}
\end{figure}

Next, we consider the accuracy of the projection for the iterates produced by \texttt{ARPACK}. We consider a matrix $A \in \mathbb{S}^{500}$ with eigenvalues distributed according to the previous experiment with $\epsilon = 10^{-10}$. We compute approximate projections by running \texttt{ARPACK} as before. Figure \ref{fig:convergence} (left) shows the projection accuracy as a function of \texttt{ARPACK}'s iterations. The upper bound of Corollary \ref{cor:1} is compared with the ``exact'' projection error $\norm{V_+ \Lambda_+ V_+^T - \Vh \Lh \Vh^T}_F$ where $V_+, \Lambda_+$ are obtained via a full eigenvalue decomposition. Although our bound eventually approximates the exact error, it exhibits oscillatory behaviour before convergence.
\begin{figure}
	\centering
	\begin{minipage}[t]{.49\textwidth}
	\includegraphics[width=.99\textwidth]{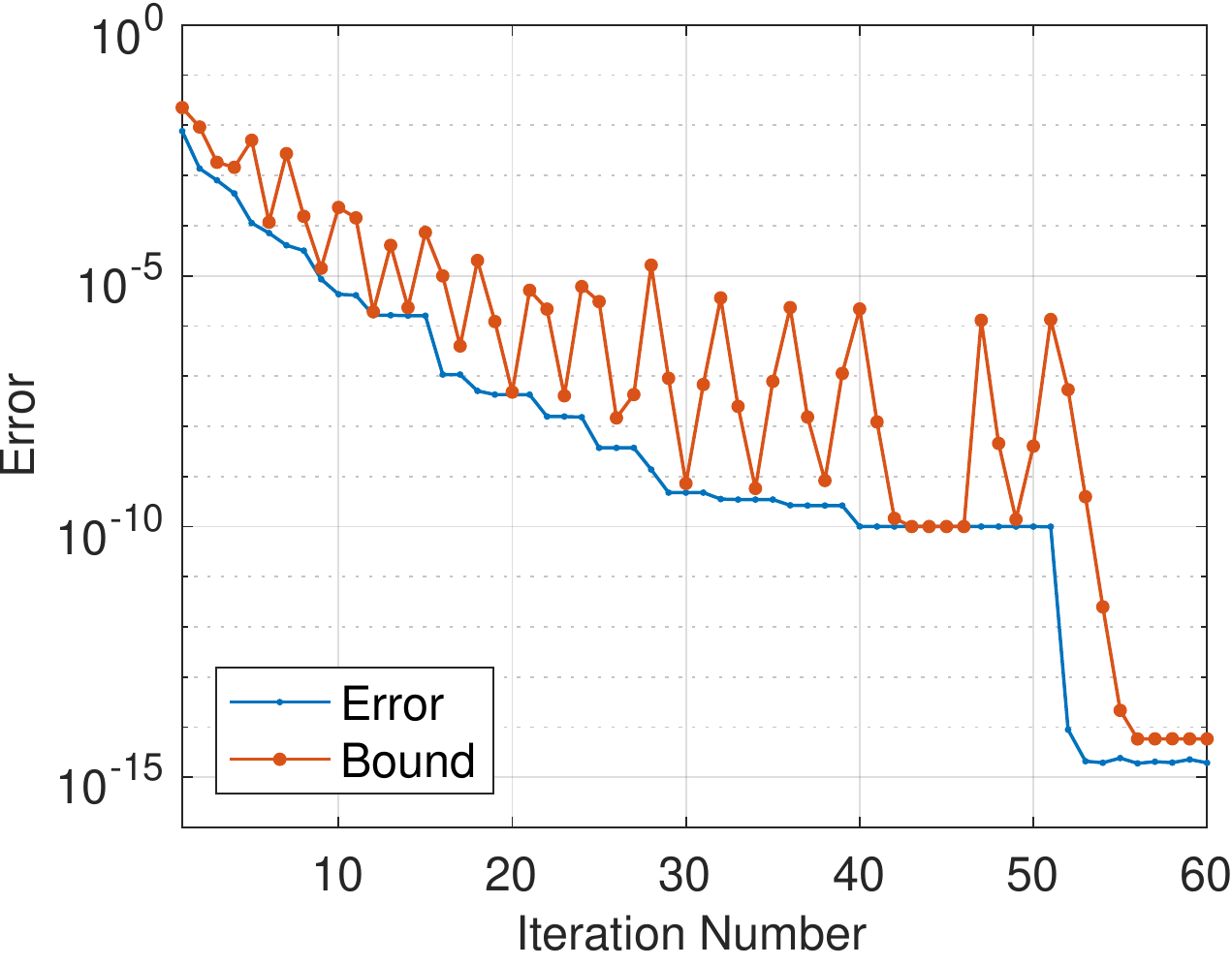}
	\end{minipage}
	\centering
	\begin{minipage}[t]{.49\textwidth}
	\includegraphics[width=.99\textwidth]{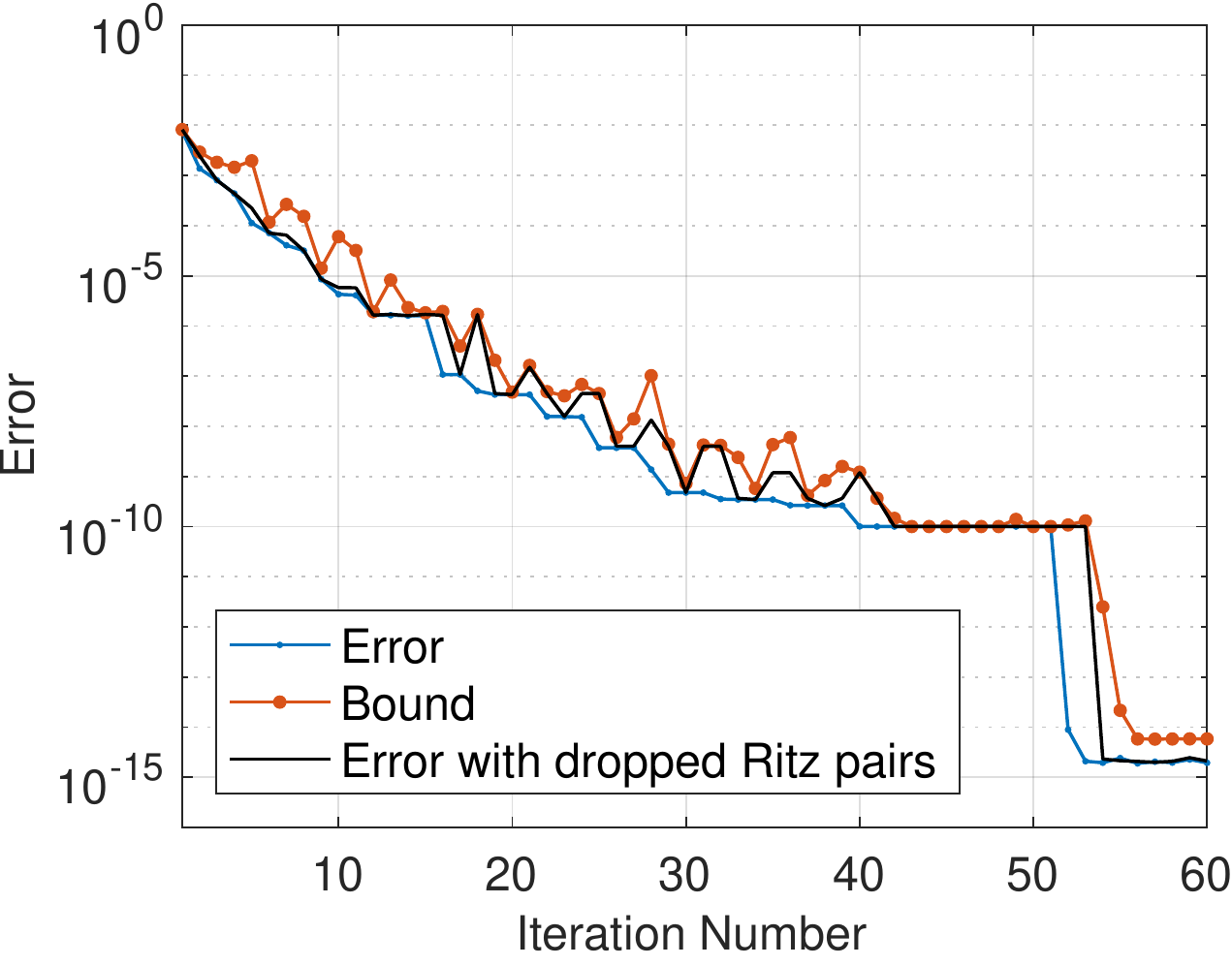}
	\end{minipage}
  \caption{Left: Convergence of the ``exact'' projection error $\norm{V_+ \Lambda_+ V_+^T - \Vh \Lh \Vh^T}_F$, where $V_+, \Lambda_+$ are obtained via a full eigenvalue decomposition and $\Vh, \Lh$ are computed with \texttt{ARPACK}, and the respective bound obtained via Corollary \ref{cor:1}.
  Right: Convergence of $\norm{V_+ \Lambda_+ V_+^T - \Vh \Lh \Vh^T}_F$,
  $\norm{V_+ \Lambda_+ V_+^T - \Vt \Lt \Vt^T}_F$
  (black solid) and the respective bound obtained by applying Corollary \ref{cor:1} for $\Vt \Lt \Vt^T$.
  }
  \label{fig:convergence}
\end{figure}

We will show that this oscillatory behaviour is caused by positive Ritz pairs with large residuals. The following lemma provides a criterion for excluding some positive Ritz pairs resulting in improved error bounds, which as we will see, exhibit significantly reduced oscillatory behaviour.

\begin{lemma} \label{lem:drop_ritzpairs}
  Suppose that $\Lh = diag(\hat \lambda_1, \dots, \hat \lambda_k) \succeq 0$ and $\Vh = [\hat v_1 \; \dots \; \hat v_k]$ are a set of Ritz pairs for some $A \in \mathbb{S}^n$. Furthermore, consider any $\Vt = [\hat v_i]_{i \not \in  \mathcal{I}}$ $\Lt = \diag([\hat \lambda_i ]_{i \not \in \mathcal{I}})$ where $\mathcal{I} \subseteq \{1, \dots, k\}$ with every $i \in \mathcal{I}$ satisfying
  \begin{equation} \label{eqn:residual_condition}
    (\sqrt{2} - 1) \norm{r_i}_2 > \max\left(
      \hat \lambda_i, \lambda_{\max}(\Vh_\perp^T A \Vh_\perp)
      \right),
  \end{equation}
  with $r_i \eqdef A\hat v_i - \hat \lambda_i\hat v_i$.
  Then, the bound of Corollary \ref{cor:1} for the projection error $$\norm{\Vt \Lt \Vt^T - V_+ \Lambda_+ V_+^T}_F$$ is smaller (i.e.~better) than the respective bound of $\norm{\Vh \Lh \Vh^T - V_+ \Lambda_+ V_+^T}_F$.
\end{lemma}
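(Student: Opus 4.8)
The plan is to apply Corollary~\ref{cor:1} to both pairs $(\Vh,\Lh)$ and $(\Vt,\Lt)$ and to show the resulting bound for the latter is strictly smaller; note that we may assume $\mathcal I\neq\varnothing$, since otherwise $\Vt=\Vh$ and the two bounds coincide. Corollary~\ref{cor:1} does apply to $(\Vt,\Lt)$, because $\Vh\tpose A\Vh=\Lh$ being diagonal forces $\Vt\tpose A\Vt=\Lt$ (a principal submatrix of a diagonal matrix). Writing $R=[\,r_1\;\cdots\;r_k\,]$ and $\widetilde R:=A\Vt-\Vt\Lt$, the Galerkin orthogonality $\Vh\tpose r_i=0$ gives $\|R\|_F^2=\sum_{i=1}^k\|r_i\|_2^2$, hence $\|\widetilde R\|_F^2=\|R\|_F^2-\sum_{i\in\mathcal I}\|r_i\|_2^2$. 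Taking $\Vt_\perp=[\,\Vh_\perp\ \ \hat V_{\mathcal I}\,]$ with $\hat V_{\mathcal I}:=[\hat v_i]_{i\in\mathcal I}$ (this changes $\Vt_\perp$ only by a right orthogonal factor, leaving all Frobenius norms below unchanged) yields
\[
 N:=\Vt_\perp\tpose A\Vt_\perp=\begin{bmatrix}M & E\\[2pt] E\tpose & F\end{bmatrix},\qquad M:=\Vh_\perp\tpose A\Vh_\perp,\ \ F:=\diag(\hat\lambda_i)_{i\in\mathcal I},
\]
whose $i$th column of $E$ equals $\Vh_\perp\tpose A\hat v_i=\Vh_\perp\tpose r_i$, so that $\|E\|_F^2=\sum_{i\in\mathcal I}\|r_i\|_2^2$ (again using $\Vh\tpose r_i=0$, which gives $\|\Vh_\perp\tpose r_i\|_2=\|r_i\|_2$). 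Applying the Pythagorean identity $\|X\|_F^2=\|\project{+}(X)\|_F^2+\|\project{-}(X)\|_F^2$ to $N$ and to $M$, together with $\|N\|_F^2=\|M\|_F^2+2\|E\|_F^2+\|F\|_F^2$ and $\widetilde D_+=\project{+}(N)$, $D_+=\project{+}(M)$, the desired comparison $2\|\widetilde R\|_F^2+\|\widetilde D_+\|_F^2<2\|R\|_F^2+\|D_+\|_F^2$ collapses, after cancellation, to the single inequality
\begin{equation*}
 \|\project{-}(N)\|_F^2\;>\;\|\project{-}(M)\|_F^2+\sum_{i\in\mathcal I}\hat\lambda_i^2 .
\end{equation*}

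Proving this is the crux. My approach is to lower-bound $\|\project{-}(N)\|_F=\|\project{+}(-N)\|_F$ by dual feasibility: for any $Y\succeq 0$, $\|\project{-}(N)\|_F\ge\langle -N,Y\rangle/\|Y\|_F$. I would take $Y=c_M\,\widehat Y_M+\sum_{i\in\mathcal I}c_i\,y_iy_i\tpose$ with $c_M,c_i\ge 0$, where $\widehat Y_M=-\project{-}(M)/\|\project{-}(M)\|_F$ sits in the leading block (omit this term if $\project{-}(M)=0$) and $y_i=\tfrac1{\sqrt2}(u_i-w_i)$, with $u_i=\Vh_\perp\tpose r_i/\|r_i\|_2$ in the leading block and $w_i$ the $i$th coordinate vector in the trailing block, so $\|y_i\|_2=1$. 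A direct computation gives $\langle -N,\widehat Y_M\rangle=\|\project{-}(M)\|_F$ and $\langle -N,y_iy_i\tpose\rangle=\|r_i\|_2-\tfrac12 u_i\tpose M u_i-\tfrac12\hat\lambda_i$, which by the hypothesis $\max(\hat\lambda_i,\lambda_{\max}(M))<(\sqrt2-1)\|r_i\|_2$ is strictly larger than $(2-\sqrt2)\|r_i\|_2$; meanwhile $\|\widehat Y_M\|_F=1$, $0\le\langle\widehat Y_M,y_iy_i\tpose\rangle\le\tfrac12$, and $\langle y_iy_i\tpose,y_jy_j\tpose\rangle=\tfrac14\cos^2\!\angle(r_i,r_j)\le\tfrac14$ for $i\neq j$. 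These per-index estimates are the source of the constant: $(2-\sqrt2)^2=2(\sqrt2-1)^2>(\sqrt2-1)^2$, and $\sqrt2-1$ is precisely the value at which the per-index gain $(2-\sqrt2)\|r_i\|_2$ can be made to dominate the per-index deficit $\hat\lambda_i<(\sqrt2-1)\|r_i\|_2$ after the Frobenius normalisation of $Y$ is accounted for.

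The main obstacle is turning these per-index bounds into the required inequality \emph{uniformly in the geometry of} $\{r_i\}_{i\in\mathcal I}$: the test matrices are not mutually orthogonal, so $\|Y\|_F^2\le c_M^2+c_MS+\tfrac34\sum_ic_i^2+\tfrac14S^2$ with $S:=\sum_ic_i$, and a single natural weighting such as $c_i\propto\|r_i\|_2$ is wasteful when many residuals are nearly parallel (the $S^2$ term then swamps $\sum_ic_i^2$). I expect to close the argument by optimising the weights $c_M,c_i$ separately in the ``well-spread'' regime, where the $y_iy_i\tpose$ contributions are nearly orthogonal and add up (choosing $c_M\propto\|\project{-}(M)\|_F$, $c_i\propto\|r_i\|_2$ makes numerator and denominator align to give $\|\project{-}(N)\|_F^2\gtrsim\|\project{-}(M)\|_F^2+(2-\sqrt2)^2\sum_i\|r_i\|_2^2$), and in the ``nearly aligned'' regime, where instead a single aggregate test direction $u\propto\sum_i\|r_i\|_2\,u_i$ already produces a negative eigenvalue whose square exceeds $\|\project{-}(M)\|_F^2$ by at least $\sum_i\|r_i\|_2^2>\sum_i\hat\lambda_i^2$; verifying that one of these always closes the gap, and handling the borderline overlaps between the two regimes, is the delicate step. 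An alternative route I would keep in reserve is a Cauchy-interlacing compression, $\|\project{-}(N)\|_F\ge\|\project{-}(W\tpose NW)\|_F$ for an orthonormal $W$ spanning the negative eigenspace of $M$ together with the $w_i$ and the residual directions, which recasts the same estimate with the roles of positive and negative parts exchanged.
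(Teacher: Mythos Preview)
Your reduction is correct and the per-index computations check out: the comparison $2\|\widetilde R\|_F^2+\|\widetilde D_+\|_F^2<2\|R\|_F^2+\|D_+\|_F^2$ is indeed equivalent to $\|\project{-}(N)\|_F^2>\|\project{-}(M)\|_F^2+\sum_{i\in\mathcal I}\hat\lambda_i^2$, and the estimate $\langle-N,y_iy_i^T\rangle>(2-\sqrt2)\|r_i\|_2$ is right. But the proposal is not a proof: you explicitly leave the ``delicate step'' of assembling the per-index estimates into the global Frobenius inequality unverified, and your two-regime sketch (well-spread versus nearly aligned residuals) comes with no argument that one of the two cases always suffices, nor any quantitative threshold separating them. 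The dual-witness bound $\|\project{-}(N)\|_F\ge\langle-N,Y\rangle/\|Y\|_F$ is tight only when $Y$ is aligned with $\project{-}(N)$ itself; your candidate $Y$ superposes non-orthogonal rank-one pieces, and controlling the cross-terms $\langle\widehat Y_M,y_iy_i^T\rangle$ and $\langle y_iy_i^T,y_jy_j^T\rangle$ simultaneously for arbitrary residual geometry is a genuine obstacle that you have not overcome.

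The paper avoids this difficulty entirely by a much simpler device you did not try: remove the indices in $\mathcal I$ \emph{one at a time}. For the singleton case $\mathcal I=\{1\}$ it works directly with the positive parts (your Pythagorean passage to negative parts is unnecessary). Since $M=\Vh_\perp^TA\Vh_\perp$ is a principal submatrix of $N=\Vt_\perp^TA\Vt_\perp$, Cauchy interlacing gives $\tilde\mu_j\le\mu_j$ for all but the top eigenvalue of $N$, whence $\|\project{+}(N)\|_F^2-\|\project{+}(M)\|_F^2\le\max(\tilde\mu_{\mathrm{top}},0)^2$. Weyl's inequality applied to $N=\bigl[\begin{smallmatrix}\hat\lambda_1&0\\0&M\end{smallmatrix}\bigr]+\bigl[\begin{smallmatrix}0&e^T\\e&0\end{smallmatrix}\bigr]$ then yields $\tilde\mu_{\mathrm{top}}\le\max(\hat\lambda_1,\lambda_{\max}(M))+\|r_1\|_2$, and the hypothesis makes this at most $\sqrt2\,\|r_1\|_2$; hence $\|\project{+}(N)\|_F^2-\|\project{+}(M)\|_F^2\le2\|r_1\|_2^2$, which is exactly the required inequality. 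For general $\mathcal I$, after removing $i_1$ the orthogonal complement strictly shrinks, so $\lambda_{\max}$ of the new ``$M$'' can only decrease and the original condition~\eqref{eqn:residual_condition} still implies the next-step condition; iterate. Your ``alternative route in reserve'' gestures at interlacing, but the idea you are missing is this one-at-a-time induction, which eliminates the need to handle interactions among multiple residuals altogether.
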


\begin{proof}
  We will first prove the case where $\mathcal{I}$ contains a single index and then generalize for the case where $\mathcal{I}$ has multiples elements.

  Without loss of generality, assume that $\mathcal{I} = \{ 1 \}$. Define $[\hat v_1\; \Vh_2] \eqdef \Vh$,
$\begin{bmatrix}
  \hat \lambda_1 & 0 \\
  0 & \Lh_2
\end{bmatrix} \eqdef \Lh$, and $[r_1 \; R_2] \eqdef R$.
Corollary \ref{cor:1} gives the following bound for the approximate projection
$
\Vh \Lh \Vh^T = \hat \lambda_1 \hat v_1 \hat v_1^T + \Vh_2 \Lh_2 \Vh_2^T
$:
\begin{align}
  \label{eqn:error}
  \norm{\Vh \Lh \Vh^T - V_+\Lambda_+ V_+^T}_F^2
  \leq \underbrace{2 \norm{R_2}_F^2 + 2 \norm{r_1}_2^2 + \norm{\project{+}(D)}_F^2}_{\eqdef \text{bound}_1}
\end{align}
where $D \eqdef \Vh_\perp^T A \Vh_\perp$.
%where $D_+ = \project{+}(\Vh_\perp^T A \Vh_\perp)$
If we do not include $\hat \lambda_1 \hat v_1 \hat v_1^T$ in our approximate projection, then we obtain the following bound:
\begin{align*}
  %\text{error}_2
  %\eqdef
  \norm{\Vh_2 \Lh_2 \Vh_2^T -  V_+\Lambda_+ V_+^T}_F
  \leq
  \underbrace{
    2\norm{R_2}^2_F + \norm{\project{+}(\tilde D)}_F^2
  }_{\eqdef \text{bound}_2}
\end{align*}
where $\tilde D \eqdef [\hat v_1 \; \Vh_\perp]^T A [\hat v_1 \; \Vh_\perp]$. Note that
\begin{equation} \label{eqn:bound_diff}
  \text{bound}_2 - \text{bound}_1 = - 2\norm{r_1}_2^2 + \norm{\project{+}(\tilde D)}_F^2 - \norm{\project{+}(D)}_F^2
\end{equation}
and denote with $\mu_1\leq \dots \leq \mu_{k+1}$ the eigenvalues of $D$ and $\tilde \mu_1 \leq \dots \leq \tilde \mu_{k}$ those of $\tilde D$. Then, using \cite[Theorem 10.1.1]{Parlett1998} on $
 \tilde D =
 \begin{bmatrix}
  \hat \lambda_1 & \hat v_1^T A \Vh_\perp \\
  \Vh_\perp^T A \hat v_1 & \Vh_\perp^T A \Vh_\perp
 \end{bmatrix}
 $ we get $\tilde \mu_i \leq \mu_i$ where $i = 1, \dots k$. Thus,
\begin{align}
\norm{\project{+}(\tilde D)}_F^2 - \norm{\project{+}(D)}_F^2 &= \sum_{i=1}^{k+1} {\max}^2(\tilde\mu_i, 0) - \sum_{i=1}^k {\max}^2(\mu_i, 0) \\
% &= {\max}^2(\tilde \mu_{l+1}, 0) + \sum_{i=1}^{l} {\max}^2(\tilde\mu_i, 0) - {\max}^2(\mu_i, 0) \\
\label{eqn:remainder_diff}
&\leq {\max}^2(\tilde \mu_{k+1}, 0). % = \norm{\project{+}(\tilde D)}_2
\end{align}
Furthermore, using \cite[Theorem 10.3.1]{Parlett1998} on $
\tilde D =
\begin{bmatrix}
  \hat \lambda_1 & 0 \\
  0 & D
\end{bmatrix}
+
\begin{bmatrix}
0 & \hat v_1^T A \Vh_\perp \\
\Vh_\perp^T A \hat v_1 & 0
\end{bmatrix}
$ we get
\begin{equation} \label{eqn:diff_max_eigenvalues}
  \tilde \mu_{l+1} \leq \max(\mu_l, \hat \lambda_1) + \norm{
    \begin{bmatrix}
      0 & \hat v_1^T A \Vh_\perp \\
      \Vh_\perp^T A \hat v_1 & 0
    \end{bmatrix}}_2
  = \max(\mu_l, \hat \lambda_1) + \norm{r}_2,
\end{equation}
where the last equality holds because $\Vh$ and $\Lh$ were obtained by performing the Rayleigh-Ritz process on $A$ (see \eqref{thm1:3}).

Combining \eqref{eqn:bound_diff}, \eqref{eqn:remainder_diff} and \eqref{eqn:diff_max_eigenvalues} gives
\begin{equation} \label{eqn:bound_inequality}
  \text{bound}_2 \leq \text{bound}_1 \quad \text{when} \quad \max(\mu_l, \hat \lambda) \leq \left(\sqrt{2} - 1\right)\norm{r}_2.
\end{equation}
This completes the proof for the case where $\mathcal{I}$ is a singleton.

Consider now the case where $\mathcal{I} = \{i_1, \dots, i_l\}$ with $l > 1$. According to the proceeding part of this proof, the Ritz pairs
$([\hat \lambda_i ]_{i \neq i_1},[\hat v_i]_{i \neq i_1})$ will produce an approximate projection with a lower approximation bound (obtained via Corollary \ref{cor:1}) than those of $\Vh \Lh \Vh^T$. Likewise, $([\hat \lambda_i ]_{i \neq i_1, i_2},[\hat v_i]_{i \neq i_1, i_2})$ will produce an approximate projection with a further improved bound if
\begin{equation} \label{eqn:residual_second_condition}
  (\sqrt{2} - 1) \norm{r_{i_2}}_2 > \max\left(
    \hat \lambda_{i_2}, \lambda_{\max}(W^T A W)
  \right).
\end{equation}
where $W$ is an orthonormal matrix spanning the nullspace of $[\hat v_i]_{i \neq i_1}^T$. However, \eqref{eqn:residual_second_condition} is implied by \eqref{eqn:residual_condition} because $\text{span}(W) \subset \text{span}(\Vh_\perp)$ and thus $\lambda_{\max}(W^T A W) \leq \lambda_{\max}(\Vh_\perp^T A \Vh_\perp)$ \cite[Corollary 4.1]{Saad2011}. Since the same argument holds for the rest of the indices contained in $\mathcal{I}$, this concludes the proof.
\end{proof}

% Thus, given a set of positive Ritz vector $\Vh = [\hat v_1, \dots, \hat v_k]$ and Ritz values $\hat \Lambda = \diag(\hat \lambda_1, \dots, \hat \lambda_k)$ a practical criterion for to obtain an approximate projection with improved bounds (as compared to the ones obtained by applying Corollary \ref{cor:1} for $\Vh \Lh \Vh^T$) is to discard all the Ritz pairs $\{(\hat v_i, \hat \lambda_i)\}$ where
%$$
%\norm{r_i}_2 \geq \frac{1}{\sqrt{2} - 1}\max\left(\hat \lambda_i, \lambda_{\max}(\Vh_\perp^T A \Vh_\perp)\right)
%$$
Figure \ref{fig:convergence} (right) aims to demonstrate the usefulness of Lemma \ref{lem:drop_ritzpairs} in reducing the oscillations of Figure \ref{fig:convergence} (left). It considers the same experiment as Figure \ref{fig:convergence} (left), but compares the convergence of the ``exact'' accuracy of $\norm{V_+ \Lambda_+ V_+^T -  \Vt \Lt \Vt^T}_F$ where $( \Lt, \Vt)$ consists of the positive Ritz pairs contained in $(\Lh, \Vh)$ for which $(\sqrt{2} - 1) \norm{r_i}_2 \leq \max\left(
\hat \lambda_i, \lambda_{\max}(\Vh_\perp^T A \Vh_\perp)
\right)$, with the respective bound obtained by applying Corollary \ref{cor:1}. We observe a significant reduction in the oscillatory behaviour of our bound as compared to Figure \ref{fig:convergence} (left). We further include in Figure \ref{fig:convergence} (right) the convergence of $\norm{V_+ \Lambda_+ V_+^T - \Vh \Lh \Vh^T}_F$ for comparison. It is worth noting that, in this experiment, $\norm{V_+ \Lambda_+ V_+^T - \Vh \Lh \Vh^T}_F$ converges almost monotonically and it is never greater than either of $\norm{V_+ \Lambda_+ V_+^T - \Vt  \Lt \Vt^T}_F$ or our bound.

\bibliographystyle{abbrv}\bibliography{../Bibliography/bibliography}

\end{document}